\documentclass[12pt,reqno]{article}
\usepackage[usenames]{color}
\usepackage{graphicx}
\usepackage{amscd}
\usepackage{amsfonts}
\usepackage{mathrsfs}

\usepackage[colorlinks=true,
linkcolor=webgreen,
filecolor=webbrown,
citecolor=webgreen]{hyperref}

\definecolor{webgreen}{rgb}{0,.5,0}
\definecolor{webbrown}{rgb}{.6,0,0}

\usepackage{color}
\usepackage{fullpage}
\usepackage{float}

\usepackage{graphics,amsmath,amssymb}
\usepackage{amsthm}
\usepackage{latexsym}
\usepackage{epsf}

\let\mathnumsetfont\mathbb

\setlength{\textwidth}{6.5in}
\setlength{\oddsidemargin}{.1in}
\setlength{\evensidemargin}{.1in}
\setlength{\topmargin}{-.5in}
\setlength{\textheight}{8.9in}

\newcommand{\seqnum}[1]{\href{http://oeis.org/#1}{\underline{#1}}}

\begin{document}


\theoremstyle{plain}
\newtheorem{theorem}{Theorem}
\newtheorem{corollary}[theorem]{Corollary}
\newtheorem{lemma}[theorem]{Lemma}
\newtheorem{proposition}[theorem]{Proposition}

\theoremstyle{definition}
\newtheorem{definition}[theorem]{Definition}
\newtheorem{example}[theorem]{Example}
\newtheorem{conjecture}[theorem]{Conjecture}

\theoremstyle{remark}
\newtheorem{remark}[theorem]{Remark}

\begin{center}
\vskip 1cm{\LARGE\bf 
Lambda Words: A Class of Rich Words \\
\vskip .12in
Defined Over an Infinite Alphabet
}
\vskip 1cm
\large
Norman Carey\\
CUNY Graduate Center\\
365 Fifth Avenue\\
New York, NY 10016\\
USA\\
\href{mailto:ncarey@gc.cuny.edu}{\tt ncarey@gc.cuny.edu}
\end{center}

\vskip .2 in

\newcommand\Nset{\mathnumsetfont N}
\newcommand\Qset{\mathnumsetfont Q}
\newcommand\Rset{\mathnumsetfont R}
\newcommand\Zset{\mathnumsetfont Z}

\begin{abstract}
Lambda words are sequences obtained by encoding the differences between ordered elements of the form $i+j\theta$, where $i$ and $j$ are non-negative integers and $1 < \theta <2$. Lambda words are right-infinite words defined over an infinite alphabet that have connections with Sturmian words, Christoffel words, and interspersion arrays. We show that Lambda words are infinite rich words. Furthermore, any Lambda word may be mapped onto a right-infinite word over a three-letter alphabet. Although the mapping preserves palindromes and non-palindromes of the Lambda word, the resulting Gamma word is not rich.
\end{abstract}
 
\section{Introduction}

Here we formalize the {\it Lambda word} \cite{car11}, which is a sequence formed by encoding the differences between ordered elements of the form $i+j\theta$, where $i$ and $j$ are non-negative integers and $\theta$ is irrational, $1<\theta <2$. For example, when $\theta = \phi = \frac{\sqrt{5}+1}{2}$, the ordering begins 
\[
(0+0\phi) < (1+0\phi) < (0+1\phi) < (2+0\phi) <(1+1\phi) <\cdots.
\]

 Taking differences from $(0,1,\phi,2,1+\phi,\ldots)$ we get 
\[
1 ,\ \phi -1,\ 2-\phi,\ \phi-1, \ldots.
\]

There are four differences so far, but two of them are the same, and so the sequence of differences may be represented by the integer sequence, $(0,1,2,1, \ldots)$. 
This is the beginning of $\Lambda_{\phi}$, the Lambda word generated by $\phi$ (\seqnum{A216763}). Showing more terms, 
\[
\Lambda_{\phi} = (0, 1, 2, 1, 2, 3, 2, 2, 3, 2, 3, 4, 3, 2, 3, 4, 3, 3, 4, 3, 4,\ldots).
\]
A Lambda word is defined over an infinite alphabet $\mathscr{A}$ 
of non-negative integers. Lambda words are related to Christoffel words
\cite{ber08} and Sturmian words \cite{ber02, carp05}. As we will show, they may be derived from interspersion arrays \cite{kim93}. Lambda words developed from considerations of certain palindromic tonal spaces that contain important musical scales \cite{car96}. The main result of this paper is to show that Lambda words are rich words \cite{gle09b,buc09b}.

Lambda words exhibit the following properties:

\begin{enumerate}
\item A Lambda word is a right-infinite word defined over an infinite alphabet \cite[p.\ 46]{car11}.
\item There are no recurrent letters ({\it a fortiori}, no recurrent factors) in a Lambda word.\label{norecur}
\item The number of occurrences of any letter in a Lambda word may be calculated by means of convergents in the continued fraction expansion of $\theta$.\label{cfr}
\item Palindromes in a Lambda word are on alphabets of no more than three letters.\label{no4}
\item A Lambda word is {rich}.\label{lamrich}
\item There exists a projection that maps a Lambda word onto a right-infinite word over a three-letter alphabet that preserves palindromes and non-palindromes.\label{goesto3}
\end{enumerate}

In this paper, after definitions in \S \ref{defsec}, the structure of the Lambda word is outlined through examples in \S \ref{exsec}. In \S \ref{cfsec}, properties of continued fractions are discussed. In \S \ref{countsec}, we demonstrate statements (\ref{norecur}) and (\ref{cfr}), and in \S \ref{langsec}, statement (\ref{no4}). The central result of the paper is to prove statement (\ref{lamrich}) in \S \ref{richsec}. Statement (\ref{goesto3}) is demonstrated in \S \ref{3alphsec}.
We define Lambda words after some basic notation and definitions.

\section{The Lambda word}\label{defsec}

Following Lothaire \cite{lot02}, we say that $\mathscr{A}$ is a set of symbols, finite or infinite. The symbols are referred to as {\it letters}. A finite or infinite sequence of letters from $\mathscr{A}$ is a {\it word}. A finite word over the alphabet $\mathscr{A}$ is an element of $\mathscr{A}^{*}$, the free monoid generated by $\mathscr{A}$. The monoid operation is concatenation where it is understood that the empty word, $\epsilon$, is a member of $\mathscr{A}^{*}$ and serves as the identity element. If $w$ is a word over $\mathscr{A}^{*}$ and $w = uv$, then $u$ is a {\it prefix} and $v$ a {\it suffix} respectively of $w$. Both $u$ and $v$ are {\it factors} of $w$, as is $x$ if $w = uxv$. The set of factors of a finite or infinite word $w$ is denoted by $\mathcal{F}(w)$.
A {\it right-infinite word} over $\mathscr{A}$ is a map $h$ from the set of non-negative integers into $\mathscr{A}$ forming an infinite sequence, $h(0), h(1),\ldots, h(n), \ldots$, written with or without commas.
If $w = x_{1}x_{2}\cdots x_{k}$ where $ x_{1},x_{2},\ldots x_{k} \in \mathscr{A}$ then the {\it reversal} of $w$ is $\widetilde{w} = x_{k}\cdots x_{2}x_{1}$. A {\it palindrome} is a word $p$ such that $p = \widetilde{p}$.

In what follows, $\mathscr{A}$ denotes an infinite alphabet of non-negative integers, elements of $\Nset_{0} = \{0,1,2,3,\ldots\}$. For letters $x,y \in \mathscr{A}$ we write $x\prec y$ to indicate lexicographic order, which, here, coincides with numerical order.

We define $\theta \in \Rset \setminus \Qset$, $1<\theta<2$ and the set 
\begin{displaymath}
S(\theta) = \{i + j\theta \mid i,j \in \Nset_{0}\}.
\end{displaymath}
$S_{\theta}$ will denote the strictly increasing sequence obtained by sorting the elements of $S(\theta)$ in ascending order. The $n$-th term of $S_{\theta}$ is denoted by $s_{n} = i_{n} + j_{n} \theta$. 

The first-order difference sequence of a given sequence $A$ is written as $\Delta A$.
In particular, we will be interested in the difference sequence $\Delta S_{\theta}$ where the $n$-th term is denoted by
 $\delta_{\theta} (n) = s_{n+1} - s_{n}$.
The differences $\delta_{\theta} (n)$ are generally decreasing as $n$ increases, but not strictly. Moreover, because $\Delta S_{\theta}$ contains repetitions, we may assign each element of $\Delta S_{\theta}$ to an integer (an element in $\mathscr{A}$) through a bijective mapping $\lambda$: Let $\lambda( \delta_{\theta} (0)) = 0$. If $n_{1}$ is the least value such that $\delta_{\theta} (n_{1}) \ne \delta_{\theta} (0)$, then $\lambda(\delta_{\theta} (n_{1}))=1$.
Then $\lambda(\delta_{\theta} (n_{2}))=2$, where $n_{2}$ is the least value such that $\delta_{\theta} (n_{2})$ is unequal to either $\delta_{\theta} (n_{1})$ or $\delta_{\theta} (n_{0})$, and so on. More than a simple coding of differences, the mapping $\lambda$ contains a deeper significance through a connection with the continued fraction expansion of $\theta$, to be discussed in \S \ref{morecf}.

Finally,
$\Lambda_{\theta} = (\lambda(\delta_{\theta} (0)),\ \lambda(\delta_{\theta} (1)),\ \lambda(\delta_{\theta} (2)),\ \lambda(\delta_{\theta} (3)), \ldots)$.
That is, $\Lambda_{\theta}$ is the word obtained from the sequence of differences in $\Delta S_{\theta}$, encoded by $\lambda$.
We refer to $\Lambda_{\theta}$ as the {\it Lambda word generated by} $\theta$.
 
\section{Examples of Lambda words}\label{exsec}
Let $\vartheta = \log_{2}3$.
The table below presents $\Lambda_{\vartheta}$ (\seqnum{A216448}) as the encoded differences of the ascending sequence $S_{\vartheta}$. 
\[
\begin{array}{c|cccccccc}
n&0&1&2&3&4&5&6&7\\
\hline
S_{\vartheta} & 0+0\vartheta &1+0\vartheta& 0+1\vartheta& 2+0\vartheta& 1+1\vartheta& 3+0\vartheta& 0+2\vartheta& 2+1\vartheta\\
\Delta S_{\vartheta} & 1-0\vartheta& -1+1\vartheta& 2-1\vartheta& -1+1\vartheta& 2-1\vartheta& -3+2\vartheta& 2-1\vartheta &\\
\Lambda_{\vartheta} & 0&1&2&1&2&3&2&
\end{array}
\]

Because $1-0\vartheta$ is the ``$0$-th'' difference in $\Delta S_{\vartheta}$, it maps, under $\lambda$, to 0. When $n$ is 1 or 3, $\delta_{\vartheta} (n) = -1+1\vartheta$. Then $\lambda(\delta_{\vartheta} (1)) = \lambda(\delta_{\vartheta} (3)) = 1$. Similarly, $\lambda(\delta_{\vartheta} (2)) = \lambda(\delta_{\vartheta} (4)) = \lambda(\delta_{\vartheta} (6)) = 2$,
and $\lambda(\delta_{\vartheta} (5)) = 3$, and so on.

According to Kimberling \cite{kim97}, the sequence of integers $i_{n}$ from $S_{\theta}$ is the {\it signature} of $\theta$, whereas $j_{n}$ gives the signature of $1/\theta$. Kimberling also introduces the {\it interspersion array}.
Figure \ref{rank} shows the interspersion array associated with the signature sequence of $1/\vartheta^{-1}$. We modify Kimberling's definition of the array so as to include zero \cite[p.\ 313]{kim93}: An array $A = (a_{ij})$, $i \ge 0, j \ge 0$, of non-negative integers is an {\it interspersion} if

\begin{enumerate}
\item the rows of $A$ comprise a partition of the non-negative integers;
\item every row of $A$ is an increasing sequence;
\item every column of $A$ is an increasing (possibly finite) sequence;
\item if $(u_{j})$ and $(v_{j})$ are distinct rows of $A$ and if $p$ and $q$ are any indices for which $u_{p} < v_{q} < u_{p+1}$, then $u_{p+1} < v_{q+1} < u_{p+2}$.
\end{enumerate}

According to Kimberling and Brown \cite{kim04}, an array such as the one shown in Figure \ref{rank} is {\it transposable}, because substituting $1/\theta$ for $\theta$ yields the transpose of the array. Adding one to each element in column one produces \seqnum{A022330}, and the same addition on elements of the first row gives \seqnum{A022331}. The relationship between transposable interspersions and Lambda words is this: Connecting the elements of the Figure \ref{rank} array in ascending order determines a sequence of vectors that begins $(1,0),\ (-1,1),\ (2,-1),\ (-1,1)$, and these are also the coefficients for the successive elements of the difference sequence $\Delta S_{\vartheta}$. Labeling each distinct vector starting with 0 yields $\Lambda_{\vartheta}=(0,1,2,1,\ldots)$. (See \seqnum{A167267} for a transposable interspersion of the signature sequence of $\phi$.)

\begin{figure}[h]
\[
\begin{array}{cccccccccccc}
 0 & 1 & 3 & 5 & 8 & 12 & 16 & 21 & 27 & 33 & 40 & 47 \\
 2 & 4 & 7 & 10 & 14 & 19 & 24 & 30 & 37 & 44 & 52 & \text{} \\
 6 & 9 & 13 & 17 & 22 & 28 & 34 & 41 & 49 & \text{} & \text{} & \text{} \\
 11 & 15 & 20 & 25 & 31 & 38 & 45 & 53 & \text{} & \text{} & \text{} & \text{} \\
 18 & 23 & 29 & 35 & 42 & 50 & \text{} & \text{} & \text{} & \text{} & \text{} & \text{} \\
 26 & 32 & 39 & 46 & 54 & \text{} & \text{} & \text{} & \text{} & \text{} & \text{} & \text{} \\
 36 & 43 & 51 & \text{} & \text{} & \text{} & \text{} & \text{} & \text{} & \text{} & \text{} & \text{} \\
 48 & \text{} & \text{} & \text{} & \text{} & \text{} & \text{} & \text{} & \text{} & \text{} & \text{} & \text{} \\
\end{array}
\]
\[
{\begin{array}{|c|c|c|c|c|c|c|c|}
\hline
0 \rightarrow 1 & 1 \rightarrow 2 & 2 \rightarrow 3 & 3 \rightarrow 4 &4 \rightarrow 5 &5 \rightarrow 6 &6 \rightarrow 7 &7 \rightarrow 8 \\
 (1,0) &(-1,1) &(2,-1) &(-1,1)&(2,-1)&(-3,2)&(2,-1)&(2,-1) \\
 0 & 1 & 2 & 1 & 2 & 3 & 2 & 2 \\
 \hline
\end{array} }
\]
\caption{Interspersion array for sequence $i(\vartheta^{-1})$ and its path of vectors.}
\label{rank}
\end{figure}

\begin{figure}[h]
\begin{center}
\includegraphics[scale = 1.11, angle = 0]{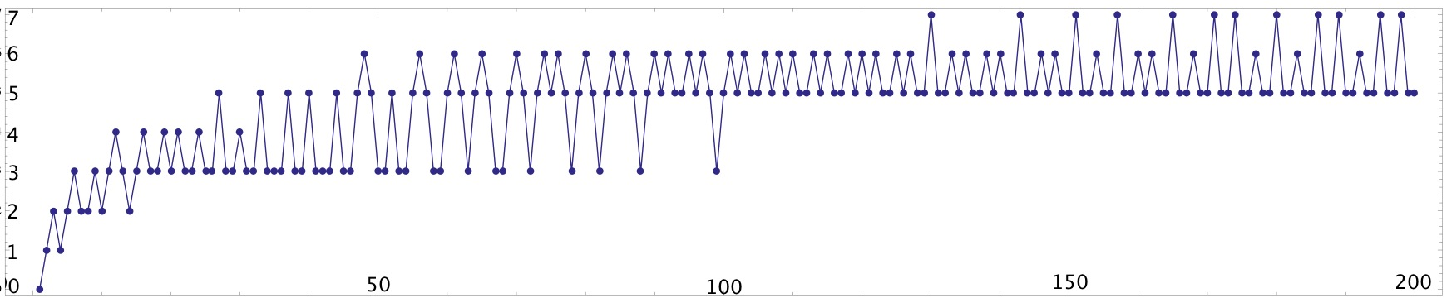}
\end{center}
$\Lambda_{\vartheta}=$
0, 1, 2, 1, 2, 3, 2, 2, 3, 2, 3, 4, 3, 2, 3, 4, 3, 3, 4, 3, 4, 3, 3, 4, 3, 3, 5, 3, 3, 4, 3, 3,
5, 3, 3, 3, 5, 3, 3, 5, 3, 3, 3, 5, 3, 3, 5, 6, 5, 3, 3, 5, 3, 3, 5, 6, 5, 3, 3, 5, 6, 5, 3, 5, 6, 5, 3, 3,
5, 6, 5, 3, 5, 6, 5, 6, 5, 3, 5, 6, 5, 3, 5, 6, 5, 6, 5, 3, 5, 6, 5, 6, 5, 5, 6, 5, 6, 5, 3, 5, 6, 5, 6, 5,
5, 6, 5, 6, 5, 6, 5, 5, 6, 5, 6, 5, 5, 6, 5, 6, 5, 6, 5, 5, 6, 5, 6, 5, 5, 7, 5, 5, 6, 5, 6, 5, 5, 6, 5, 6,
5, 5, 7, 5, 5, 6, 5, 6, 5, 5, 7, 5, 5, 6, 5, 5, 7, 5, 5, 6, 5, 6, 5, 5, 7, 5, 5, 6, 5, 5, 7, 5, 5, 7, 5, 5,
6, 5, 5, 7, 5, 5, 6, 5, 5, 7, 5, 5, 7, 5, 5, 6, 5, 5, 7, 5, 5, 7, 5, 5$,\ldots$
\caption{Graph of first 200 elements of $\Lambda_{\vartheta}$.}
\label{biggraph}
\end{figure}

Figure \ref{biggraph} provides a graphical representation of the first 200 elements of $\Lambda_{\vartheta}$. It shows that $\Lambda_{\vartheta}$ gradually increases with $n$, but with repetitions and switchbacks, allowing for an abundance of palindromes. Figure \ref{biggraph} suggests other properties of Lambda words.
For example, each integer appears a finite number of times: After smaller integers die out, larger ones take their place. 

Of particular interest is \seqnum{A216763}, the Fibonacci Lambda word, $\Lambda_{\phi}$, where $\phi = \frac{\sqrt{5}+1}{2}$. Figure \ref{fibograph} presents its first 196 elements. Because of the slow convergence of the continued fraction of $\phi$, successive members of $\Lambda_{\phi}$ differ by at most unity.

\begin{figure}[h]
\begin{center}
\includegraphics[scale = 1.15, angle = 0]{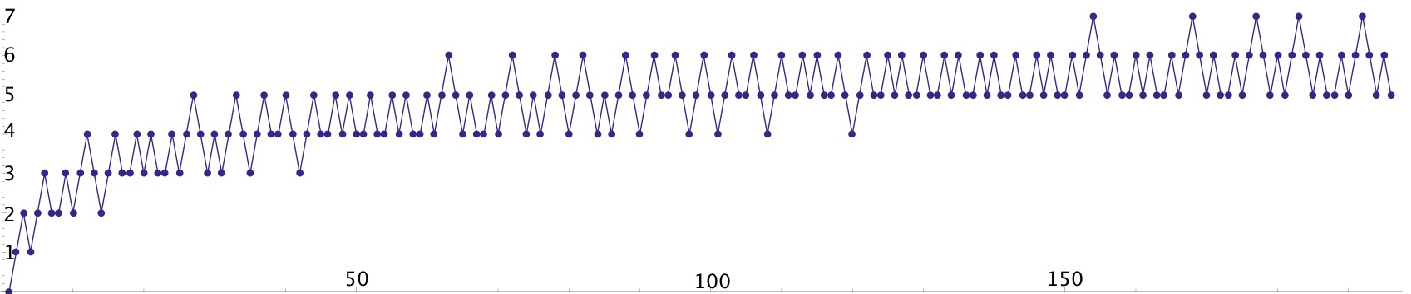}
\end{center}

$\Lambda_{\phi}=$
0, 1, 2, 1, 2, 3, 2, 2, 3, 2, 3, 4, 3, 2, 3, 4, 3, 3, 4, 3, 4, 3, 3, 4, 3, 4, 5, 4, 3, 4, 3, 4, 5, 4, 3, 4,
 5, 4, 4, 5, 4, 3, 4, 5, 4, 4, 5, 4, 5, 4, 4, 5, 4, 4, 5, 4, 5, 4, 4, 5, 4, 5, 6, 5, 4, 5, 4, 4, 5, 
4, 5, 6, 5, 4, 5, 4, 5, 6, 5, 4, 5, 6, 5, 4, 5, 4, 5, 6, 5, 4, 5, 6, 5, 5, 6, 5, 4, 5, 6, 5, 4, 5,
 6, 5, 5, 6, 5, 4, 5, 6, 5, 5, 6, 5, 6, 5, 5, 6, 5, 4, 5, 6, 5, 5, 6, 5, 6, 5, 5, 6, 5, 5, 6, 5, 6, 
5, 5, 6, 5, 6, 5, 5, 6, 5, 5, 6, 5, 6, 5, 5, 6, 5, 6, 7, 6, 5, 6, 5, 5, 6, 5, 6, 5, 5, 6, 5, 6, 7,
 6, 5, 6, 5, 5, 6, 5, 6, 7, 6, 5, 6, 5, 6, 7, 6, 5, 6, 5, 5, 6, 5, 6, 7, 6, 5, 6, 5, \ldots

\caption{Graph of first 196 elements of $\Lambda_{\phi}$.\label{fibograph}}
\end{figure}

The Lambda word generated by $\pi -2$, \seqnum{A216764}, presents a very different profile, as shown in Figure \ref{pigraph}. The continued fraction expansion, $[1, 7, 15, 1, 292, \ldots ]$, converges relatively rapidly. Note that the greatest difference between successive values so far is 7, which follows from the value of the second partial quotient of the expansion. Further on, the word will exhibit differences of 15, 292, and so on.
 
\begin{figure}[h]
\begin{center}
\includegraphics[scale = 1.15, angle = 0]{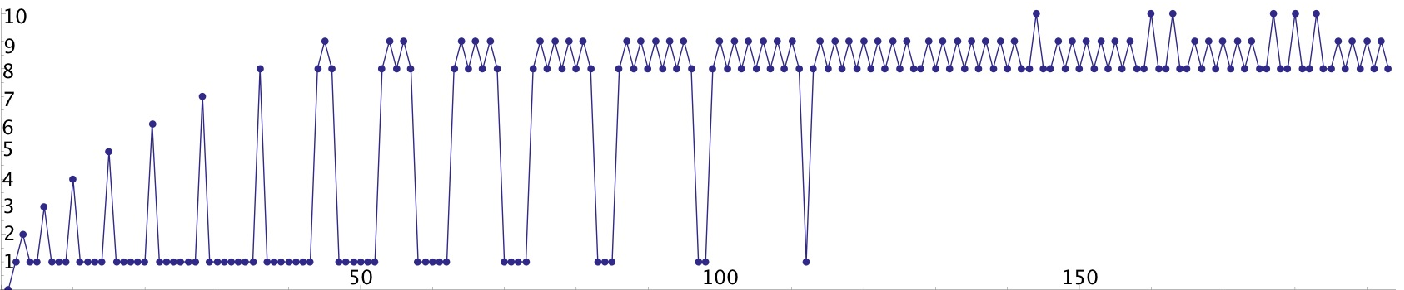}
\end{center}

$\Lambda_{\pi -2}=$
0, 1, 2, 1, 1, 3, 1, 1, 1, 4, 1, 1, 1, 1, 5, 1, 1, 1, 1, 1, 6, 1, 1, 1, 1, 1, 1, 7, 1, 1, 1, 1, 
1, 1, 1, 8, 1, 1, 1, 1, 1, 1, 1, 8, 9, 8, 1, 1, 1, 1, 1, 1, 8, 9, 8, 9, 8, 1, 1, 1, 1, 1, 8, 9, 8, 
9, 8, 9, 8, 1, 1, 1, 1, 8, 9, 8, 9, 8, 9, 8, 9, 8, 1, 1, 1, 8, 9, 8, 9, 8, 9, 8, 9, 8, 9, 8, 1, 1, 
8, 9, 8, 9, 8, 9, 8, 9, 8, 9, 8, 9, 8, 1, 8, 9, 8, 9, 8, 9, 8, 9, 8, 9, 8, 9, 8, 9, 8, 8, 9, 8, 9, 
8, 9, 8, 9, 8, 9, 8, 9, 8, 9, 8, 8, 10, 8, 8, 9, 8, 9, 8, 9, 8, 9, 8, 9, 8, 9, 8, 8, 10,8, 8, 10,
 8, 8, 9, 8, 9, 8, 9, 8, 9, 8, 9, 8, 8, 10, 8, 8, 10, 8, 8, 10, 8, 8, 9, 8, 9, 8, 9, 8, 9, 8, \ldots 
\caption{Graph of first 193 elements of $\Lambda_{\pi -2}$.\label{pigraph}}
\end{figure}

\section{Lambda words and continued fractions}\label{cfsec}

The structure of a Lambda word is determined by the continued fraction expansion of $\theta$, its generating value. We demonstrate some of these relationships here, beginning with a theorem about differences in $\Delta{S_{\theta}}$. Standard theorems regarding continued fractions can be found in many texts \cite{car12, jon55, old63, khi35}. If $[t_{0};t_{1},\ldots,t_{k},\ldots]$ is a continued fraction, then the $t_{k}$ are its {\it partial quotients} and $c_{0}=[t_{0}]$, $c_{1}=[t_{0};t_{1}]$, $c_{2}=[t_{0};t_{1},t_{2}]$, etc., are its {\it principal convergents}. There are two formal convergents for any continued fraction: $c_{-2} = 0/1$ and $c_{-1} = 1/0$. If $c_{k} = a_{k}/b_{k}$, then $\gcd(a_{k},b_{k}) = 1$. Moreover, $c_{k+1} = (t_{k+1}a_{k}+a_{k-1})/(t_{k+1}b_{k}+b_{k-1})$. If $t_{k} > 1$ we define integer $t$ such that $1 \le t < t_{k}$. Then $[t_{0};t_{1},\ldots,t_{k-1},t]$ is an {\it intermediate convergent} of the continued fraction.
In this paper, the term ``convergent'' means a principal convergent {\it or} an intermediate convergent.
Khinchin \cite{khi35} devises the term ``best approximation of the second kind'' for a rational $p/q$ of some real number $\theta$ if $|q\theta-p| < |q^{\prime}\theta - p^{\prime}|$ whenever $q \ge q^{\prime}$. (A best approximation of the first kind involves the value $|\theta - p/q|$ and plays no role here. A ``best approximation'' will refer to one of the second kind.)

In addition to the best ``two-sided'' approximations described, Richards \cite{ric81} further defines a best one-sided approximation: Take $p/q < \theta$. If $0 < q\theta-p < q^{\prime}\theta-p^{\prime}$ whenever $q \ge q^{\prime}$, then $p/q$ is a best lower, or left, approximation. For any best left approximation it follows that $p = \lfloor q\theta \rfloor$. When $p/q$ is a best higher, or right approximation, $\theta <p/q$, and $q = \lfloor p/\theta \rfloor$.
With $1 < \theta < 2$, any intermediate convergent is a best one-sided approximation and (with a single exception)
any principal convergent is a best two-sided approximation. Certainly, any best two-sided approximation is the best on its side as well. (The exception is in the case of the formal convergent $0/1$. It is not a best left approximation because the convergent $1/1$ is also less than $\theta$ and has the same denominator.)
\begin{theorem}\label{iffconv}
$| A-B\theta| \in \Delta S_{\theta} \iff A/B$ is a best approximation \emph{(}of the second kind\emph{)} of $\theta$.
\end{theorem}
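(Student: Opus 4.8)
The plan is to realize $\Delta S_\theta$ explicitly from the structure $S(\theta)=\bigcup_{j\ge 0}(\Nset_{0}+j\theta)$ --- which makes $S_\theta$ carry, on each unit interval, a three--distance (Steinhaus) configuration of the points $\{q\theta\}$ --- and then to match gap lengths against the fractional--part description of the best one--sided approximations of $\theta$ established in \S\ref{cfsec}. Two preliminary observations do most of the setup. First, every gap satisfies $\delta_\theta(n)\le 1$: if $s_n=i+j\theta$ then $s_n+1=(i+1)+j\theta\in S(\theta)$, so a gap longer than $1$ would contain an element of $S(\theta)$ in its interior. Second, writing the gap--vector $(a,b)=(i_{n+1}-i_n,\ j_{n+1}-j_n)\in\Zset^{2}$, so that $\delta_\theta(n)=a+b\theta$, the bound $0<\delta_\theta(n)\le 1<\theta$ forces exactly one of three cases: $b=0$, where $\delta_\theta(n)=1$ and $A/B=1/0$; or $b\ge 1$, where necessarily $a=-\lfloor b\theta\rfloor$, so that $B:=b$, $A:=\lfloor B\theta\rfloor$ give $\delta_\theta(n)=\{B\theta\}$ and, since $i_{n+1}\ge 0$, also $i_n\ge A$; or $b\le -1$, where $a=\lceil|b|\theta\rceil$, so that $B:=|b|$, $A:=\lceil B\theta\rceil$ give $\delta_\theta(n)=1-\{B\theta\}$ and, since $j_{n+1}\ge 0$, also $j_n\ge B$. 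Hence every element of $\Delta S_\theta$ has a canonical representation $|A-B\theta|$ with $A\ge 1$, $B\ge 0$, and both sides of the equivalence involve only such pairs.

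For the implication ``best approximation $\Rightarrow$ difference'' I would dispose of $1/0$ at once ($|1-0\theta|=1=s_1-s_0$) and otherwise use that $A/B$ is a best left or a best right approximation. Say it is a best left approximation: then $A=\lfloor B\theta\rfloor\ge 1$, $B\ge 1$, and $\{B\theta\}=\min\{\{q\theta\}:1\le q\le B\}$. The distinct elements $A=A+0\theta$ and $B\theta=0+B\theta$ of $S(\theta)$ satisfy $A<B\theta$, and I claim they are consecutive in $S_\theta$, which gives $|A-B\theta|=B\theta-A\in\Delta S_\theta$. Indeed, any $i+j\theta\in(A,B\theta)$ has $0\le j<B$ (else $i+j\theta\ge j\theta\ge B\theta$), so $q:=B-j\in\{1,\dots,B\}$ satisfies $0<q\theta-i<B\theta-A=\{B\theta\}$, whence $\{q\theta\}\le q\theta-i<\{B\theta\}$, contradicting the minimality of $\{B\theta\}$. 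The best--right case is the mirror image: $B\theta$ and $A=\lceil B\theta\rceil$ are consecutive, using $\{B\theta\}=\max\{\{q\theta\}:1\le q\le B\}$.

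For the converse, let $\delta_\theta(n)=|A-B\theta|$ be in the canonical form above. If $B=0$ then $A/B=1/0$ and we are done, so assume $\delta_\theta(n)<1$, $B\ge 1$; thus $A/B=\lfloor B\theta\rfloor/B<\theta$ (case $b\ge1$) or $A/B=\lceil B\theta\rceil/B>\theta$ (case $b\le-1$). Suppose $A/B$ is not a best approximation; being on the corresponding side of $\theta$, it then fails the extremal condition for a best left, respectively best right, approximation, so there is $1\le q<B$ with $\{q\theta\}<\{B\theta\}$, respectively $\{q\theta\}>\{B\theta\}$. In the first case, since $i_n\ge A=\lfloor B\theta\rfloor\ge\lfloor q\theta\rfloor$, the point $s_n+\{q\theta\}=(i_n-\lfloor q\theta\rfloor)+(j_n+q)\theta$ lies in $S(\theta)$ and strictly between $s_n$ and $s_{n+1}=s_n+\{B\theta\}$; in the second, since $j_n\ge B>q$, the point $s_n+(1-\{q\theta\})=(i_n+\lceil q\theta\rceil)+(j_n-q)\theta$ lies in $S(\theta)$ and strictly between $s_n$ and $s_{n+1}=s_n+(1-\{B\theta\})$. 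Either way $s_n,s_{n+1}$ fail to be consecutive, a contradiction; hence $A/B$ is a best approximation. (The minimality or maximality just obtained also forces $\gcd(A,B)=1$, so the canonical representation is automatically reduced.)

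I expect the delicate point to be this last step: the interpolating element must be attached to the correct endpoint of the gap --- the one for which a coordinate bound is available ($i_n\ge A$ in the left case, $j_n\ge B$ in the right case) --- since the analogous construction at the other endpoint need not land in $\Nset_{0}\times\Nset_{0}$. The remaining care is purely bookkeeping around the degenerate convergents: $1/0$, which realizes the unique gap of length $1$, and $0/1$, which is correctly excluded because the value $\theta>1$ is never a gap. Both of these, together with the translation of ``best one--sided approximation'' into the extremal conditions on $\{q\theta\}$ used above, are supplied by the discussion preceding the theorem.
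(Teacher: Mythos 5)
Your proof is correct and follows essentially the same route as the paper's: both directions rest on the coordinate constraints that force each gap to be $|A-B\theta|$ with the neighbouring element satisfying $i_n\ge A$ (resp.\ $j_n\ge B$), on inserting $s_n$ plus a strictly smaller one-sided best-approximation difference into a putative gap to contradict consecutiveness, and, for the other implication, on showing that $A$ and $B\theta$ are consecutive in $S_\theta$. The differences are presentational: you first put every gap in canonical form ($1$, $\{B\theta\}$, or $1-\{B\theta\}$), translate best one-sided approximations into record minima/maxima of $\{q\theta\}$, and write out the right-hand and $1/0$ cases that the paper handles only parenthetically or leaves as an exercise.
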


\begin{proof}(See also \cite[Theorem 1]{car12}.)

$\Rightarrow$. 
We prove the case $\delta_{\theta} (n) = B\theta - A$, i.e., $A < B\theta$.
\[
\delta_{\theta} (n) = {s_{n+1}} - {s_{n}} = (i_{n+1}+j_{n+1}\theta) -(i_{n}+j_{n}\theta) = B\theta -A.
\]

Then $(j_{n+1}- j_{n})\theta = B\theta$ and $ i_{n} - i_{n+1} = A$, and so ${s_{n+1}\ge B\theta }$ and ${s_{n}} \ge A$.
Define $g$ such that
\begin{equation}\label{g}
g = ({s_{n+1}}-B\theta) = ({s_{n}} - A).
\end{equation}

Then $g \in S_{\theta}$. 
Assume $B\theta-A >0$ but $A/B$ is not a best left approximation. Then there exist successive best left approximations $p/q$ and $p^{\prime}/q^{\prime}$ with $q \le B < q^{\prime}$ such that $q\theta -p < B\theta - A$. 

Then
\[
0< q\theta -p <B\theta - A.
\]

Adding $A+g$ we obtain
\[
A +g < A+g-p+q\theta < B\theta +g.
\]
Clearly, $A+g-p+q\theta = s_{n^{\prime}} \in S_{\theta}$, and so, by (\ref{g}), ${s_{n}} < s_{n^{\prime}} < {s_{n+1}}$. Contradiction. (For the case $\delta_{\theta} (n) = A - B\theta$, redefine $g$ as $\hat{g} = ({s_{n+1}} - A) = ({s_{n}} - B\theta)$.)

$\Leftarrow$. The second part of the proof is again by contradiction: Assume $A/B$ is a best left approximation.
Now assume that $A = s_{n}$ and $B\theta = s_{n+k}$ with $k>1$.
Then we must have $s_{n+1} = i+j\theta$ such that
\begin{equation}\label{ij}
 A < i+j\theta <B\theta
\end{equation}
Then $j < B$. Because $\lfloor B\theta \rfloor = A$, $i < A$.

Subtracting $A$ from (\ref{ij}) we obtain
\begin{equation}\label{ij2}
 0 < j\theta -(A-i) <B\theta-A.
\end{equation}
 Because $A/B$ is a best left approximation, there is no integer $k$ such that $j\theta - k < B\theta - A$. In particular, $j\theta - (A-i) \not< B\theta -A$. contradicting (\ref{ij2}). Then if $A/B$ is a best left approximation, $\delta_{\theta}(n) = (B\theta - A)$, therefore $(B\theta - A) \in \Delta S_{\theta}$.
 \end{proof}

\subsection{The mapping $\lambda$ formalized}
\label{morecf}

The continued fraction allows for an algorithmic form of the mapping $\lambda$: In the case of the formal convergent $1/0$, define $\lambda(1-0\theta) = \lambda(\delta_{\theta} (0)) = 0$. Otherwise, for each distinct difference $|A-B\theta|$, express $A/B$ as a finite continued fraction $[1;t_{1}, t_{2}, \ldots, t_{k}]$. Then $\lambda$ may be computed as
\[
 \lambda |A-B\theta| = \sum_{i = 0}^{k} t_{i}.
\]

\begin{figure}[h]
\[
\begin{array}{| c | c | l | c |}
|A-B\vartheta| &A/B & [1;t_{1}, \ldots, t_{k}] & \sum_{i = 0}^{k} t_{i}\\
\hline
1 & 1/0 & & 0\\
\vartheta - 1 &1/1 & [1] & 1\\
2 - \vartheta &2/1 & [1,1 ] & 2\\
2\vartheta -3 & 3/2 & [1,1,1 ] & 3\\
5 - 3\vartheta & 5/3 & [1,1,1,1 ] & 4\\
8 - 5\vartheta & 8/5 & [1,1,1 ,2] & 5\\
7\vartheta - 11 &11/7 & [1,1,1,2,1 ] & 6\\
12\vartheta -19 &19/12 & [1,1,1,2,2 ] & 7\\
27 - 17\vartheta &27/17 & [1,1,1,2,2,1 ] & 8\\
46 - 29\vartheta &46/29 & [1,1,1,2,2,2 ] & 9\\
65 - 41\vartheta &65/41 & [1,1,1,2,2,3 ] & 10\\
\end{array}
\]
\caption{Computing $\lambda |A-B\vartheta|$ by sums of partial quotients.\label{sumspq}}
\end{figure}

Figure \ref{sumspq} shows the first eleven convergents of the continued fraction expansion of $\vartheta = \log_{2}3$ under $\lambda$. Every non-negative integer is equal to exactly one of the summations, which is guaranteed for all $\theta$ by the second half of the proof of Theorem \ref{iffconv}. As a result, the mapping $\lambda$ associates the difference $|A-B\theta|$ with the row of the Stern-Brocot tree on which $A/B$ is found \cite[pp.\ 116--117]{ste58, gra94}.

\subsection{The Hurwitz chain}

In what follows we will make use of an oft-noted connection between Farey series, continued fractions, and mediants \cite{gol88, ric81}. If $a/b < c/d$ are consecutive fractions in a Farey series then $bc - ad = 1 = \gcd(a,b) = \gcd(c,d)$. Then we say that $(a/b$, $c/d)$ form a {\it Farey pair}. Whenever $a/b < \theta <c/d$, the Farey pair is also a pair of best left-right approximations of $\theta$. The {\it Hurwitz chain} for $\theta$ contains all such pairs $(a/b,c/d)$ \cite{hur94}. If $(a/b,c/d)$ belongs to the Hurwitz chain for $\theta$, then, if $a/b$ is an intermediate convergent, $c/d$ is the previous principal convergent and conversely. Otherwise, $a/b$ and $c/d$ are consecutive principal convergents. The mediant of these fractions, $(a+c)/(b+d)$ falls between them and is also a convergent. This implies that $(a+c)/(b+d)$ forms a member of the Hurwitz chain for $\theta$ with either $a/b$ or $c/d$. 

Let $A/B$ and $p/q$ be a pair of best left-right approximations of $\theta$ such that $|A-B\theta| > |p-q\theta |$. Then either $(A/B,p/q)$ or $(p/q,A/B)$ belongs to the Hurwitz chain for $\theta$. If $A/B$ is a principal convergent, then $p/q$ is the principal convergent that immediately follows $A/B$, and if $A/B$ is an intermediate convergent, then $p/q$ is the principal convergent that immediately precedes $A/B$.
Furthermore, $q > B$ when $A/B$ is a principal convergent, $q < B$ otherwise. Either way, $(A+p)/(B+q)$ is the first convergent that succeeds both. 

\section{Counting letters}\label{countsec}
A recurrent factor of an infinite word is a factor that appears infinitely often. Here we show that there are no recurrent letters, consequently, no recurrent factors in a Lambda word. We let $\lambda|A-B\theta| = x$ and let ${|\Lambda_{\theta}|}_{x}$ represent the number of times the letter $x$ occurs in $\Lambda_{\theta}$. 

\begin{theorem}\label{howmany}
${|\Lambda_{\theta}|}_{x} = pq$.
\end{theorem}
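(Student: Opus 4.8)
The plan is to count the occurrences of the letter $x=\lambda|A-B\theta|$ by turning the count into a lattice‑point problem. Assume first $A<B\theta$, so that occurrences of $x$ are the positions $n$ with $\delta_\theta(n)=B\theta-A$; the case $A>B\theta$ (which includes the letter $0$, where $A/B=1/0$ and $B=0$) is handled the same way with the two sides of $\theta$ interchanged. Recall that $p/q$ is the best one‑sided approximation partnering $A/B$ in the Hurwitz chain with $|p-q\theta|<|A-B\theta|$; since $A/B<\theta$ we then have $p/q>\theta$. By the computation in the proof of Theorem~\ref{iffconv}, whenever $\delta_\theta(n)=B\theta-A$ the element $g:=s_n-A=s_{n+1}-B\theta$ lies in $S(\theta)$; writing $g=u+v\theta$ with $u,v\in\Nset_{0}$ gives $s_n=(u+A)+v\theta$ and $s_{n+1}=u+(v+B)\theta$. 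Conversely, for every $(u,v)\in\Nset_{0}^{2}$ both $(u+A)+v\theta$ and $u+(v+B)\theta$ lie in $S(\theta)$, and they form a consecutive pair of $S_\theta$ exactly when no element of $S(\theta)$ lies strictly between them. Since $g$ determines $s_n$ and hence $n$, the first step is to establish the bijection
\[
|\Lambda_\theta|_x \;=\; \#\Bigl\{(u,v)\in\Nset_{0}^{2} : S(\theta)\cap\bigl((u+A)+v\theta,\ u+(v+B)\theta\bigr)=\emptyset\Bigr\};
\]
call such a pair \emph{admissible}.

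The second step rewrites admissibility as a lattice condition. An element $i+j\theta\in S(\theta)$ lies strictly between the two endpoints iff, with $a=i-u$ and $b=j-v$, we have $(a,b)\in\Zset^{2}$, $a\ge-u$, $b\ge-v$, and $A<a+b\theta<B\theta$. Hence $(u,v)$ is admissible iff the shifted quadrant $\{(a,b)\in\Zset^{2}:a\ge-u,\ b\ge-v\}$ is disjoint from the \emph{obstruction set} $\mathcal{O}=\{(a,b)\in\Zset^{2}:A<a+b\theta<B\theta\}$. I then claim the admissible set is exactly $\{0,\dots,p-1\}\times\{0,\dots,q-1\}$, which has $pq$ elements, giving the theorem.

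The crux is describing $\mathcal{O}$. Shifting by $(A,0)$ reduces this to $\mathcal{O}_{0}=\{(a,b)\in\Zset^{2}:0<a+b\theta<\varepsilon\}$, where $\varepsilon:=B\theta-A$. Because $A/B$ and $p/q$ are a Farey pair of best left–right approximations, $Bp-Aq=1$, so $\{(-A,B),(p,-q)\}$ is a $\Zset$‑basis of $\Zset^{2}$. Writing $(a,b)=\mu(-A,B)+\nu(p,-q)$ gives $a+b\theta=\mu\varepsilon+\nu\varepsilon'$ with $\varepsilon':=p-q\theta$; since $0<\varepsilon'<\varepsilon$, the ratio $\rho:=\varepsilon'/\varepsilon$ is irrational (because $\theta$ is) and lies in $(0,1)$, so membership in $\mathcal{O}_{0}$ becomes $0<\mu+\nu\rho<1$. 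For each $\nu\ne0$ this has the unique solution $\mu=\mu(\nu)=-\lfloor\nu\rho\rfloor$, which is $\le0$ for $\nu\ge1$ and $\ge1$ for $\nu\le-1$, and for $\nu=0$ it has no solution. Therefore $\mathcal{O}$ is precisely the set of points
\[
(\alpha_\nu,\beta_\nu)\;=\;\bigl(A\,(1-\mu(\nu))+\nu p,\ \ B\,\mu(\nu)-\nu q\bigr),\qquad \nu\in\Zset\setminus\{0\}.
\]

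The last step reads the theorem off this parametrization, using only $A,B\ge0$ and $p,q\ge1$. From $\nu=1$ we get $(\alpha_{1},\beta_{1})=(A+p,\,-q)\in\mathcal{O}$, and since $A+p\ge-u$ for all $u\ge0$ this obstructs every $(u,v)$ with $v\ge q$; from $\nu=-1$ we get $(\alpha_{-1},\beta_{-1})=(-p,\,B+q)\in\mathcal{O}$, which obstructs every $(u,v)$ with $u\ge p$. Hence every admissible pair lies in $\{0,\dots,p-1\}\times\{0,\dots,q-1\}$. Conversely, for $\nu\ge1$ one has $\beta_\nu=B\mu(\nu)-\nu q\le-q$ (since $B\mu(\nu)\le0$ and $-\nu q\le-q$), and for $\nu\le-1$ one has $\alpha_\nu=A(1-\mu(\nu))+\nu p\le-p$ (since $A(1-\mu(\nu))\le0$ and $\nu p\le-p$); so no point of $\mathcal{O}$ has both $\alpha_\nu\ge-(p-1)$ and $\beta_\nu\ge-(q-1)$, which means every $(u,v)$ with $0\le u\le p-1$ and $0\le v\le q-1$ is admissible. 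This identifies the admissible set with the $p\times q$ rectangle and yields $|\Lambda_\theta|_x=pq$; the case $A>B\theta$ runs identically with the basis $\{(A,-B),(-p,q)\}$ (now $Aq-Bp=1$) and $\mathcal{O}$ replaced by $(A,0)-\mathcal{O}_{0}$. I expect the main obstacle to be the third step — recognizing that the Farey relation makes the change of basis unimodular and collapses the two‑dimensional obstruction condition to the one‑dimensional inequality $0<\mu+\nu\rho<1$, whose solution set has a transparent sign pattern — together with a careful treatment of the bijection in the first step, in particular the degenerate case $B=0$.
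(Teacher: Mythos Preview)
Your proof is correct and arrives at the same $p\times q$ rectangle as the paper, but by a genuinely different route. The paper argues directly with the two neighboring differences $Y=p-q\theta$ and $Z=(B+q)\theta-(A+p)=X-Y$: if $b\ge q$ then $s_n+Y$ intrudes between $s_n$ and $s_n+X$, and if $a\ge A+p$ then $s_n+Z$ does; this pins down $S(\theta)_x\subseteq\{a+b\theta:A\le a<A+p,\ 0\le b<q\}$. Your $\nu=1$ and $\nu=-1$ obstructions are exactly these two intrusions, so on the necessity side the two arguments coincide. Where you diverge is in exploiting the Farey relation $Bp-Aq=1$ to make $\{(-A,B),(p,-q)\}$ a unimodular $\Zset$-basis and thereby parametrize the \emph{entire} obstruction set $\mathcal{O}$ by a single integer $\nu$; the sign analysis of $(\alpha_\nu,\beta_\nu)$ then dispatches the sufficiency direction (every $(u,v)$ in the rectangle really is admissible), which the paper's argument does not address explicitly. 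The paper's approach is shorter and stays close to the three-gap intuition; yours is more systematic, proves both inclusions, and the unimodular reduction to the one-dimensional condition $0<\mu+\nu\rho<1$ is a technique that would transfer readily to related counting questions.
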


\begin{proof}
We prove the theorem for $A/B <\theta$; the proof for $A/B > \theta$ is left as an exercise.
First, let us define
\[
\begin{array}{lcccc}
X &= &B\theta&-&A\\
Y &= &p&-&q\theta\\
Z &= &(B+q)\theta&-&(A+p)
\end{array}
\]
Then $Z = X-Y$. It follows that $A/B < (A+p)/(B+q) < \theta < p/q$. Furthermore, $Y<X$ and $Z<X$.
We define a subset ${S(\theta)}_{x}$ of ${S(\theta)}$ such that
\[
{S(\theta)}_{x} = \{s_{n}\mid \delta_{\theta} (n) = X \text{ where } \lambda(X) = x\}. 
\]
That is, $s_{n} \in {S(\theta)}_{x} \iff s_{n}+X = s_{n+1}$. We will show that there are $pq$ elements in ${S(\theta)}_{x}$, and so ${|\Lambda_{\theta}|}_{x} = pq$. In order to do so, we prove that 
\begin{equation}\label{sxset}
{S(\theta)}_{x} = \{a+b\theta \mid A \le a < (A+p); 0 \le b <q\}
\end{equation}

Because $s_{n} \in {S(\theta)}$ it immediately follows that $0 \le b$.

Further, $s_{n} + X \in S(\theta)$ implies $a+b\theta + (B\theta -A) \in S(\theta)$ and so $A \le a$.

If $s_{n} \in {S(\theta)}_{x}$ and $b-q \ge 0$, then it would follow that $s_{n} + Y = a+b\theta + (p - q\theta) \in S(\theta)$. But $Y<X$ and so $s_{n} < s_{n} + Y < s_{n} + X$, that is, $s_{n+1} \ne s_{n} + X$. Thus, $s_{n} \notin {S(\theta)}_{x}$, contrary to hypothesis. Therefore $b < q$. 

If $a \ge A+p$, then $a+b\theta + ((B+q)\theta - (A+p)) = s_{n} + Z \in S(\theta)$. 
But $Z < X$ and so $s_{n} < s_{n} + Z < s_{n} + X$, contradicting $s_{n+1} = s_{n} + X$. Then $a < A+p$.

Therefore $A \le a < (A+p)$ and $0 \le b < q$ and so there are $pq$ elements in ${S(\theta)}_{x}$, i.e., ${|\Lambda_{\theta}|}_{x} = pq$.
\end{proof}

We note that, when $A/B > \theta$, we have
\begin{equation}\label{sxset2}
{S(\theta)}_{x} = \{a+b\theta \mid 0 \le a < p; B \le b < B+q\}.
\end{equation}

\section{Two-Letter factors of Lambda words}\label{langsec}

To determine the limit on the number of different letters in palindromes in a Lambda word, we explore the two-letter factors of the word, which we define as members of the set $\mathcal{F}_{2}(\Lambda_{\theta})$. For $x,y \in \mathscr{A}$, we determine the conditions under which $xy \in \mathcal{F}_{2}(\Lambda_{\theta})$.
 
\subsection{Letter repetitions}

To begin, we investigate the two-letter factors of the form $xx = x^{2}$.
Let $t$ be the greatest integer such that $x^{t}$ is a factor of $\Lambda_{\theta}$.
Let $t_{k}$ be the $k$-th partial quotient of the continued fraction expansion of $\theta = [1;t_{1},\ldots,t_{k},\ldots]$.
\begin{theorem}\label{power}
If $x = \lambda|A-B\theta|$ where $A/B$ is an intermediate convergent, $xx \notin \mathcal{F}_{2}(\Lambda_{\theta})$.  If $A/B$ is the principal convergent $a_{k}/b_{k}$, $xx \in \mathcal{F}_{2}(\Lambda_{\theta})$. There are two special cases when $A/B$ is $a_{-1}/b_{-1}$ or $a_{0}/b_{0}$.
\end{theorem}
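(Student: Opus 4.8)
The plan is to use the counting formula of Theorem~\ref{howmany} together with the explicit description \eqref{sxset} of the set ${S(\theta)}_x$, and to read off from it exactly when two consecutive differences in $\Delta S_\theta$ can both equal the same value $X = |A - B\theta|$. Recall that $s_n \in {S(\theta)}_x$ iff $s_n + X = s_{n+1}$; so $xx \in \mathcal{F}_2(\Lambda_\theta)$ iff there is some $s_n$ with both $s_n$ and $s_n + X$ lying in ${S(\theta)}_x$, i.e.\ iff ${S(\theta)}_x$ is not ``sparse'' relative to the step $X$. I would treat the representative case $A/B < \theta$ (the case $A/B > \theta$ being symmetric via \eqref{sxset2}), write $A/B$ as a convergent of $\theta = [1;t_1,\ldots,t_k,\ldots]$, and let $p/q$ be its Hurwitz-chain partner (the following principal convergent if $A/B$ is principal, the preceding one if $A/B$ is intermediate), as set up in \S\ref{cfsec}. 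Then $X = B\theta - A$, $Y = p - q\theta$, $Z = X - Y$, and $(A+p)/(B+q)$ is the next convergent.

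The key computation is this: if $s_n = a + b\theta \in {S(\theta)}_x$ with $A \le a < A+p$ and $0 \le b < q$, then $s_n + X = (a - A + B\theta) + b\theta$ rewritten in coordinates is $(a-A) + (b+?)\theta$ — more carefully, $s_n + X = a + b\theta + B\theta - A = (a - A) + (b + B')\theta$ where I must track the integer/$\theta$ split of $X$ honestly using $X = B\theta - A$ (so the new point is at ``$a$-coordinate'' $a - A$ and ``$\theta$-coordinate'' $b + B$, since $X$ contributes $-A$ to the integer part and $+B$ to the $\theta$-part). For this to land back in ${S(\theta)}_x$ we need $A \le a - A < A + p$ and $0 \le b + B < q$. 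The first inequality forces $2A \le a$, and combined with $a < A+p$ this has a solution only if $A < p$, i.e.\ only if $B\theta - A > \ldots$; the genuinely decisive constraint is $b + B < q$, i.e.\ $b < q - B$, which has a non-negative solution iff $q > B$. By the Hurwitz-chain facts recalled at the end of \S\ref{cfsec}: $q > B$ precisely when $A/B$ is a \emph{principal} convergent, and $q < B$ when $A/B$ is an \emph{intermediate} convergent. So when $A/B$ is intermediate there is no such $s_n$ and $xx \notin \mathcal{F}_2(\Lambda_\theta)$; when $A/B = a_k/b_k$ is principal, the box \eqref{sxset} has enough room to contain a pair $s_n, s_n + X$ and hence $xx \in \mathcal{F}_2(\Lambda_\theta)$. (One should also check, and it follows from the same box dimensions, that when $A/B = a_k/b_k$ the maximal power is $x^{t_{k+1}}$ or thereabouts, though the statement as given only asserts $xx$ appears.)

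The two special cases $A/B = a_{-1}/b_{-1} = 1/0$ and $A/B = a_0/b_0 = t_0/1 = 1/1$ need to be handled by hand, because the formal convergent $1/0$ and the first principal convergent $1/1$ behave anomalously in the Hurwitz chain — indeed this is exactly the exceptional situation flagged in the paragraph before Theorem~\ref{iffconv} (that $0/1$ fails to be a best left approximation). For $1/0$: the letter $0 = \lambda(1 - 0\theta)$ occurs only at position~$0$ (since $\delta_\theta(0) = 1$ and ${|\Lambda_\theta|}_0 = 1$ as $pq$ degenerates), so $00 \notin \mathcal{F}_2(\Lambda_\theta)$ trivially. For $1/1$: a direct look at ${S(\theta)}_1$ via \eqref{sxset} with $A = B = 1$, $p/q$ the partner, shows whether the letter $1$ can be repeated — here one expects $11 \in \mathcal{F}_2(\Lambda_\theta)$ iff $t_1 \ge 2$, since the multiplicity of the letter $1$ is governed by $t_1$.

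The main obstacle I anticipate is bookkeeping the integer-versus-$\theta$ coordinate split when adding $X$, $Y$, or $Z$ to a point of $S(\theta)$, and making sure the ``box'' \eqref{sxset} is used with the correct side of $\theta$; the sign conventions ($A < B\theta$ vs.\ $A > B\theta$) and the direction of the Hurwitz-chain inequality $q \gtrless B$ must be kept consistent throughout, and it is easy to get an off-by-one error at the boundary $a = 2A$ or $b = q - B$. Everything else is a direct unwinding of Theorems~\ref{howmany} and the Hurwitz-chain structure.
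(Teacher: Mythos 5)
Your proposal is correct and follows essentially the same route as the paper: both arguments read membership of $xx$ off the box description \eqref{sxset}/\eqref{sxset2} of ${S(\theta)}_x$ from Theorem~\ref{howmany}, reduce it to the Hurwitz-chain inequalities $q\gtrless B$ (and $p\gtrless A$) distinguishing principal from intermediate convergents, and treat $1/0$ and $1/1$ by hand. The only difference is cosmetic: the paper tracks the full run length, obtaining the maximal power $t=1+t_{k+1}$ (resp.\ $t=t_{k+1}$ in the special cases), whereas you only verify the dichotomy $t\ge 2$ versus $t=1$, which is all the statement requires.
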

\begin{proof}
Let $|A-B\theta| = X$. If a $xx$ is a factor of $\Lambda_{\theta}$, then ${S(\theta)}_{x}$ would have to include both $a+b\theta$ and $a+b\theta + X$. If $x^{t}$ is a factor, then $a+b\theta + (t-1)X$ must be an element of ${S(\theta)}_{x}$. We can solve for $t$ by invoking the conditions on $a$ and $b$ in ${S(\theta)}_{x}$, as shown in (\ref{sxset}) and (\ref{sxset2}).
\[
\left.
\begin{array}{c}
A \le a + (t-1)A < A+p \\
0 \le b + (t-1)B < q \\
\end{array}\right\} \text{ by (\ref{sxset})}
\]
\[
\left.
\begin{array}{c}
0 \le a + (t-1)A < p \\
B \le b + (t-1)B < B+q \\
\end{array}\right\} \text{ by (\ref{sxset2})}
\]
We are interested in the maximal value for $t$, so we substitute minimal values for $a$ and $b$, which simplifies to two results
\begin{equation}\label{t}
t < 1+ \dfrac{p}{A} \text{ and } t < 1 + \dfrac{q}{B}.
\end{equation}

When $A/B$ is an intermediate convergent, $p/q$ is the previous principal convergent, and so $B>q$ (and $A>p$). Therefore $t < 1 + (q/B) < 2$, i.e., $t = 1$. Therefore, when $A/B$ is an intermediate convergent $xx\notin \mathcal{F}_{2}(\Lambda_{\theta})$.

If $A/B$ is a principal convergent, then it is associated with the partial quotient $t_{k}$, and $p/q$ is the the next convergent, associated with $t_{k+1}$. Let $p^{\prime}/q^{\prime}$ represent the principal convergent that immediately precedes $A/B$. Then $q/B = t_{k+1}+(q^{\prime}/B)$. Whenever $k>0$, $q^{\prime}/B \le 1$, and so we can solve for $t$: $t< 1+ t_{k+1} +(q^{\prime}/B)$. Then $t = 1+t_{k+1}$. (Alternatively, $t< 1+ t_{k+1}+(p^{\prime}/A)$ yields the same result for $k>0$.) Because $t_{k+1} \ge 1$, then $t>1$ and so whenever $A/B$ is a principal convergent, $xx\in \mathcal{F}_{2}(\Lambda_{\theta})$.

There are two special cases that involve principal convergents when $k= -1$ or $0$. In both cases, $t = t_{k+1}$: For $k = -1$, $A/B = 1/0$, $p/q = 1/1$. The first inequality of (\ref{t}) resolves to $t < 2$, but the second entails division by zero and is rejected. Then $t = t_{k+1} = t_{0} = 1$ and $xx\notin \mathcal{F}_{2}(\Lambda_{\theta})$. For $k = 0$, $A/B = 1/1$, $p/q = (1 + t_{1})/t_{1}$ The inequalities in (\ref{t}) resolve to $t < 2 + t_{1}$ and $t < 1+t_{1}$. Both must be satisfied but the second is more restrictive, so again $t = t_{1}=t_{k+1}$ Then $xx \in \mathcal{F}_{2}(\Lambda_{\theta})$ whenever $t_{1} \ge 2$.
\end{proof}

\subsection{Square-free two-letter factors}
\label{difl}

For $u,v, \in \mathscr{A}$ $u \ne v$, we examine conditions under which $uv \in \mathcal{F}_{2}(\Lambda_{\theta})$. We let $r$ be a word over $\mathscr{A}$ such that $r \in \mathcal{F}(\Lambda_{\theta})$.
We begin by examining cases of pairs of letters $(u,v)$ with $u\prec v$ such that $vru \notin \mathcal{F}(\Lambda_{\theta})$.

Recall that $X-Y = Z$. We define $W = X+Y$. We write $a/b \oplus c/d = (a+c)/(b+d)$ to indicate that the fraction on the right side is the mediant of the two on the left. Similarly, $a/b \ominus c/d = (a-c)/(b-d)$.
We define a function $f$ on the difference $|A-B\theta|$ such that $f|A-B\theta| = A/B$. 
The pair $(f(X),f(Y))$ is a member of the Hurwitz chain for $\theta$. 
Then $f(X)\oplus f(Y) = f(Z)$ and so $(f(Y),f(Z))$ is also a member of the Hurwitz chain. It follows that $f(X)\ominus f(Y) = f(W)$ and so either $(f(W),f(X))$ or $(f(W),f(X))$ is a member of the Hurwitz chain as well. Certainly $f(W),\ f(X),\ f(Y)$, and $f(Z)$ are all convergents to $\theta$,
and so by Theorem \ref{iffconv}, $W$, $X$, $Y$, and $Z$, are differences in $\Delta S_{\theta}$.
See Figure \ref{WandfW}.

\begin{figure}[h]
\[
\begin{array}{|ccccccc|}
\hline
\multicolumn{7}{|c|}{\text{\underline{Differences}}}\\
W &=& X+Y &= &\mid(B-q)\theta&-&(A-p)\mid \\
X &= &&&\mid B\theta&-&A \mid \\
Y &= &&&\mid p&-&q\theta \mid \\
Z &= &X-Y &= &\mid (B+q)\theta&-&(A+p)\mid \\
\hline
\multicolumn{7}{|c|}{\text{\underline{Ratios}}}\\
f(W) &=& f(X)\ominus f(Y) &= &\multicolumn{3}{c|}{\mid A-p\mid / \mid B-q\mid} \\
f(X) &= &&&\multicolumn{3}{c|}{A/B}\\
f(Y) &= &&&\multicolumn{3}{c|}{p/q} \\
f(Z) &= &f(X)\oplus f(Y) &= &\multicolumn{3}{c|}{(A+p)/(B+q)}\\
\hline
\multicolumn{7}{|c|}{\text{\underline{Letters}}}\\
\lambda(W) &=& w &&&&\\
\lambda(X) &=& x &&&&\\
\lambda(Y) &=& y &&&&\\
\lambda(Z) &=& z &&&&\\
\hline
\end{array}
\]

\caption{Differences, ratios, and letters that govern potential two-letter factors.}
\label{WandfW}
\end{figure}

The following theorem provides groundwork for Theorem \ref{ishurw}, which will show that $uv$ is a factor if and only if $f((U),f(V))$ belongs to the Hurwitz chain for $\theta$. As before, for difference $U$, we let $\lambda(U) = u$. 

Let $r$ be a (possibly empty) factor of $\Lambda_{\theta}$.
\begin{theorem}\label{notzw}
If $(f(X),f(Y))$ belongs to the Hurwitz chain for $\theta$, $zrw \notin \mathcal{F}(\Lambda_{\theta})$. Furthermore, $wz \notin \mathcal{F}_{2}(\Lambda_{\theta})$.
\end{theorem}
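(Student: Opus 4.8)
The plan is to recast both claims as statements about where, inside the strictly increasing sequence $S_\theta$, the sets $S(\theta)_w$ and $S(\theta)_z$ of Theorem~\ref{howmany} sit. The dictionary is: a letter $u$ occurs at position $n$ of $\Lambda_\theta$ precisely when $s_n\in S(\theta)_u$, and since $S_\theta$ is increasing, position order is numerical order. Thus $zrw\in\mathcal F(\Lambda_\theta)$ for some $r$ amounts to the existence of indices $i<j$ with $\Lambda_\theta(i)=z$ and $\Lambda_\theta(j)=w$, i.e.\ to some element of $S(\theta)_z$ being smaller than some element of $S(\theta)_w$; so for the first assertion it suffices to prove $\min S(\theta)_z>\max S(\theta)_w$. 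For the second assertion, $wz\in\mathcal F_2(\Lambda_\theta)$ would require an $n$ with $s_n\in S(\theta)_w$ and $s_{n+1}=s_n+W\in S(\theta)_z$, so it suffices to prove $(S(\theta)_w+W)\cap S(\theta)_z=\emptyset$.

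Next I would read off the two boxes from Theorem~\ref{howmany} together with $(\ref{sxset})$--$(\ref{sxset2})$, using the hypothesis in the form $A/B<\theta<p/q$, $Bp-Aq=1$, $X=B\theta-A>Y=p-q\theta>0$. Because $f(Z)=(A+p)/(B+q)$ is $<\theta$ (since $X>Y$) and is a convergent, it is a best left approximation, so Theorem~\ref{howmany} puts $S(\theta)_z$ in the form $\{a+b\theta\mid A+p\le a<\cdots,\ 0\le b<\cdots\}$; in particular every element of $S(\theta)_z$ has first coordinate $\ge A+p$, with $\min S(\theta)_z=A+p$ attained at $a=A+p,\ b=0$. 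For $S(\theta)_w$ one must locate $f(W)=f(X)\ominus f(Y)$, and the argument splits on whether $A/B$ is an intermediate or a principal convergent of $\theta$ --- equivalently whether $B>q$ or $q>B$, the case $B=q$ being only the exceptional pair $(1/1,\,2/1)$. In the intermediate case $f(W)=(A-p)/(B-q)<\theta$ and $(\ref{sxset})$ gives $S(\theta)_w=\{a+b\theta\mid A-p\le a\le A-1,\ 0\le b\le q-1\}$; in the principal case $f(W)=(p-A)/(q-B)>\theta$ and $(\ref{sxset2})$ gives $S(\theta)_w=\{a+b\theta\mid 0\le a\le A-1,\ q-B\le b\le q-1\}$. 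Either way every element of $S(\theta)_w$ has first coordinate $\le A-1$ and $\max S(\theta)_w=(A-1)+(q-1)\theta$.

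The two claims then close in one line each. First, $\min S(\theta)_z-\max S(\theta)_w=(A+p)-(A-1)-(q-1)\theta=p+1-(q-1)\theta>0$, since $p/q>\theta$ gives $p>q\theta$, whence $p+1>q\theta+1>q\theta-\theta=(q-1)\theta$; so no occurrence of $z$ precedes an occurrence of $w$, that is, $zrw\notin\mathcal F(\Lambda_\theta)$ for every (possibly empty) factor $r$. Second, if $s_n=a+b\theta\in S(\theta)_w$ then the first coordinate of $s_n+W$ equals $a-A+p$ in the intermediate case and $a+p-A$ in the principal case; since $a\le A-1$ this is at most $p-1<A+p$, and as every element of $S(\theta)_z$ has first coordinate $\ge A+p$ we get $s_n+W\notin S(\theta)_z$. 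Hence $wz\notin\mathcal F_2(\Lambda_\theta)$.

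The main obstacle is the middle step: pinning down $S(\theta)_w$ and $S(\theta)_z$ exactly. This means determining, for each of $f(W)$ and $f(Z)$, the partner fraction that Theorem~\ref{howmany} pairs with it --- which is governed by the ``immediately preceding/following principal convergent'' rule rather than by the pair $(A/B,p/q)$ of the hypothesis --- and then tracking the relevant numerators and denominators through the continued-fraction recursions, attending to the formal-convergent boundary cases (the convergent $1/0$ and the pair $(1/1,\,2/1)$), where a coordinate bound would otherwise become negative and the box must be clipped. Once that bookkeeping is in place, nothing else is more than arithmetic.
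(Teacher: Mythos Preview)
Your argument is correct. The Hurwitz partner of $f(W)$ with smaller associated difference is indeed $p/q$ in the intermediate case and $A/B$ in the principal case, so (\ref{sxset})--(\ref{sxset2}) give exactly the rectangles you write; for $f(Z)$ you only use the lower bound $a\ge A+p$ on the first coordinate, and that holds regardless of which partner $f(Z)$ takes. The edge case $B=q$ (the pair $(1/1,2/1)$) falls under your principal-case formula with $q-B=0$ and needs no special clipping, so the ``obstacle'' you flag is in fact already handled by what you wrote.

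The route, however, differs from the paper's. The paper never writes down $S(\theta)_w$: it locates only $\min S(\theta)_z$ and then argues by an intruder --- for any $s_m$ beyond that point, at least one of $s_m+X$, $s_m+Y$ lies in $S_\theta$ strictly between $s_m$ and $s_m+W$, so $\delta_\theta(m)\ne W$; the $wz$ claim is a one-line check at the single index preceding the first $z$. You instead invoke Theorem~\ref{howmany} in full, turning both claims into coordinate comparisons of explicit rectangles. The cost is the intermediate/principal case split needed to pin down $S(\theta)_w$, which the paper's intruder argument avoids entirely; the benefit is that the structure becomes completely transparent --- indeed your data yield the slightly sharper $\max S(\theta)_w+W=(p-1)+(B-1)\theta<A+p=\min S(\theta)_z$ (since $(B-1)\theta<B\theta=A+X<A+1$), from which both parts of the theorem follow at once.
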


\begin{proof}

We prove the first part of the theorem by showing that the difference ${W}$ never comes after $Z$ in $\Delta S_{\theta}$. We prove the case $X = A-B\theta$:
\[
\begin{array}{ccccc}
W &=& X&+&Y \\ 
X &=& A&-&B\theta \\
Y &=& q\theta &- &p \\
Z &=& (A+p) &- &(B+q)\theta
\end{array}
\]

The smallest member of $S_{\theta}$ for which $\delta_{\theta} (n) = Z$ is $(B+q)\theta$.
Therefore, if $W$ appears after $Z$, then there exists $\delta_{\theta} (m) = W$ such that $s_{m}\ge A+p > (B+q)\theta$. Consequently, $s_{m} > (B+q)\theta - Y = p+B\theta$.  But $s_{m} < s_{m} +Y < s_{m} +X < s_{m} +W$ and at least one of these is true: $s_{m} +Y \in S_{\theta}$ and/or $s_{m} +X \in S_{\theta}$. Then 
$W \ne \delta_{\theta} (m)$ whenever $s_{m} \ge (B+q)\theta$.
Therefore $zrw \notin \mathcal{F}(\Lambda_{\theta})$.

It is also true that $wz$ is not a factor. The first part of the proof has shown that $\delta_{\theta} (n) \ne W$ when $s_{n} > (B+q)\theta$ and so we need only show that if $s_{m} = (B+q)\theta$, then $\delta_{\theta}({m-1}) \ne W$. Clearly, $(B+q)\theta - Y \in S_{\theta}$. However, $(B+q)\theta - W < (B+q)\theta - Y < (B+q)\theta = s_{m}$ and so $W \ne \delta_{\theta}({m-1})$.
\end{proof}

Certainly if $z^{\prime} \succeq z$ then $z^{\prime}rw$ is not a factor of $\Lambda_{\theta}$.

\vskip .1 in 

Theorem \ref{power} revealed the conditions under which $x^{2} \in \mathcal{F}_{2}(\Lambda_{\theta})$. The following theorem demonstrates the conditions under which $xy \in \mathcal{F}_{2}(\Lambda_{\theta})$ is comprised of different letters.
Let $u=\lambda(U)$, $v=\lambda(V)$, and $u\ne v$.
\begin{theorem}\label{ishurw}
$uv \in \mathcal{F}_{2} (\Lambda_{\theta}) \iff (f(U),f(V))$ belongs to the Hurwitz chain for $\theta$. 
\end{theorem}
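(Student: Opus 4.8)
The plan is to read off both implications from the explicit lattice-rectangle description of the sets $S(\theta)_x$ produced in the proof of Theorem~\ref{howmany} (equations~(\ref{sxset}) and~(\ref{sxset2})), combined with Theorem~\ref{iffconv} (differences $\leftrightarrow$ best approximations) and the mediant/Hurwitz-chain bookkeeping already set up in \S\ref{langsec}. Throughout, the key translation is: $uv$ occurs at position $n$ in $\Lambda_\theta$ exactly when $s_n\in S(\theta)_u$ and $s_n+U\in S(\theta)_v$, since $s_n\in S(\theta)_u$ forces $s_{n+1}=s_n+U$ and then $s_n+U\in S(\theta)_v$ forces $\delta_\theta(n+1)=V$.

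For the direction ``$(f(U),f(V))$ a link of the Hurwitz chain $\Rightarrow uv\in\mathcal F_2(\Lambda_\theta)$'', normalize the labelling so that $f(U)=A/B<\theta<C/D=f(V)$ with $BC-AD=1$, hence $U=B\theta-A$ and $V=C-D\theta$ (the other order will follow by the symmetric choice). By the remarks preceding Theorem~\ref{howmany}, $S(\theta)_u$ is the rectangle of lattice points $a+b\theta$ with $A\le a<A+p,\ 0\le b<q$ for the right-partner $p/q$ of $A/B$, and $S(\theta)_v$ is, by~(\ref{sxset2}), the rectangle $0\le a<p^{*},\ D\le b<D+q^{*}$ for the left-partner $p^{*}/q^{*}$ of $C/D$. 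Translating the second rectangle by $-U$ (that is, $a\mapsto a+A$, $b\mapsto b-B$ in lattice coordinates), an occurrence of $uv$ amounts to an integer point in $S(\theta)_u$ with $A\le a<A+p^{*}$ and $D-B\le b<D-B+q^{*}$ as well. I would take $a=A$ (legitimate since $A\ge1$) and show the remaining $b$-interval $\max(0,D-B)\le b<\min(q,\,D-B+q^{*})$ is nonempty; this is precisely where $BC-AD=1$ and the fact that $(A/B,C/D)$ is a link are used, since those are exactly the relations that make the two rectangles share a corner. The degenerate link $\{1/0,1/1\}$ (the letter $0$, which occurs once) is checked directly.

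For the converse, suppose $uv\in\mathcal F_2(\Lambda_\theta)$ with $u\ne v$, so $\delta_\theta(n)=U$ and $\delta_\theta(n+1)=V$ for some $n$; by Theorem~\ref{iffconv} both $f(U)$ and $f(V)$ are best approximations of $\theta$. I would argue the contrapositive: if $\{f(U),f(V)\}$ is not a link of the Hurwitz chain, then $uv$ is not a factor. If $f(U)$ and $f(V)$ lie on the same side of $\theta$ they do not straddle it, and the farther of the two plays the role of $W$ relative to the nearer one and its opposite partner, so Theorem~\ref{notzw} (with the remark that $z'\succeq z$ still blocks $z'rw$) applies. If they lie on opposite sides but are not Farey-adjacent, then a strictly simpler fraction lies between them in the chain, and by iterating the mediant description one produces differences $X,Y$ with $f(W)=f(U)$ and $f(Z)=f(V)$ (or reduces, in finitely many steps, to such a configuration); Theorem~\ref{notzw} then again shows the letter it forces cannot sit where $uv$ demands. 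Together with Theorem~\ref{power} (which disposes of the repeated-letter case and is not needed here since $u\ne v$), this rules out every non-link pair.

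The main obstacle is exactly this last reduction: giving a uniform argument that every pair of best approximations which is \emph{not} a Hurwitz link can be matched to an instance of Theorem~\ref{notzw}. The Hurwitz chain is not a simple path — each principal convergent $a_k/b_k$ is linked to $a_{k-1}/b_{k-1}$, to all intermediate convergents it generates, and to $a_{k+1}/b_{k+1}$ — so ``non-adjacent'' splits into several geometrically distinct configurations, and in each one must exhibit a shorter difference that is forced into the gap between the two prescribed occurrences, contradicting consecutiveness in $S_\theta$. Once the bookkeeping is organized by recording the lattice coordinates $(i_n,j_n)$ of $s_n$ and tracking how they change when one adds $U$ and then $V$, each configuration reduces to a short computation of the same type as those in the proofs of Theorems~\ref{howmany} and~\ref{notzw}.
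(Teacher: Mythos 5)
Your ``link $\Rightarrow$ factor'' half is essentially the paper's own argument: the paper also reads the occurrence off the rectangles (\ref{sxset}), (\ref{sxset2}), exhibiting the explicit witness $\max({S(\theta)}_{x}) + X \in {S(\theta)}_{y}$ (and $\max({S(\theta)}_{x}) - Y \in {S(\theta)}_{y}$ for the reversed order); your corner point $a=A$ is the same computation in slightly different clothing, and the nonemptiness you defer does hold because the partner of the larger-difference member of a link is the other member. The genuine problem is in your contrapositive for ``factor $\Rightarrow$ link''. Theorem \ref{notzw}, even with the remark that $z^{\prime}\succeq z$ still blocks $z^{\prime}rw$, only excludes pairs in which one letter is $w$ and the other is $\succeq z$ \emph{for the same link} (and, for adjacency, only the single word $wz$). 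Your same-side reduction does not land in that situation: when $f(U)$ and $f(V)$ lie on the same side of $\theta$ and you decompose the larger difference as $U = X+Y$, the nearer fraction plays the role of $f(X)$ or $f(Y)$, not of $f(Z)$. Concretely, for $\vartheta=\log_{2}3$ take $f(U)=2/1$ and $f(V)=5/3$, both greater than $\vartheta$ and with determinant $1$; the unique link decomposing $U=2-\vartheta$ is $(5/3,3/2)$, for which $f(W)=2/1=f(U)$ but $f(X)=5/3=f(V)$, while $z=\lambda(8-5\vartheta)\succ v=\lambda(5-3\vartheta)$. So Theorem \ref{notzw} says nothing about the pair $\{u,v\}=\{2,4\}$, which is nevertheless exactly what must be excluded here.

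This is not a bookkeeping issue that ``reduces in finitely many steps'': the same-side case needs a different argument, and the paper supplies one directly in its proof. Namely, if $f(U),f(V)$ are on the same side (say both $>\theta$, $U>V$) and $s_{n},\ s_{n}+U,\ s_{n}+U+V$ are consecutive in $S_{\theta}$, then writing $s_{n}=a+b\theta$ the membership of $s_{n}+U+V$ in $S(\theta)$ forces $b\ge B^{\ast}+q^{\ast}$, hence $s_{n}+V\in S(\theta)$ with $s_{n}<s_{n}+V<s_{n}+U+V$, which forces $U=V$, a contradiction; the unimodularity condition is then the only thing Theorem \ref{notzw} is used for (among $\{W,X,Y,Z\}$ only the pair $\{W,Z\}$ fails determinant $1$, and $zrw$, $wz$ are banned). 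A second, smaller hole of the same kind sits in your opposite-side, non-unimodular case: Theorem \ref{notzw} excludes $z^{\prime}rw$ but not the adjacency $wz^{\prime}$ for $z^{\prime}\succ z$, so the order $uv$ with $u=w$ there also needs an argument you have not given. In short, the forward implication cannot be wholly outsourced to Theorem \ref{notzw}; the straddling condition must be proved by a direct lattice computation of the type the paper gives.
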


\begin{proof}
Theorem \ref{notzw} has limited membership in $\mathcal{F}_{2}(\Lambda_{\theta})$ for a fixed $\{x,y\}$ to a pair of letters in $\{w,x,y,z\}$.
Then $U,V \in \{W,X,Y,Z\}$.
Let $U = |A^{\ast}-B^{\ast}\theta|$, $V = |p^{\ast}-q^{\ast}\theta|$. By Theorem \ref{iffconv}, $A^{\ast}/B^{\ast}$ and $p^{\ast}/q^{\ast}$ are convergents. 

$\Rightarrow$ Assume $uv \in \mathcal{F}_{2}(\Lambda_{\theta})$. If $(f(U),f(V))$ does not belong to the Hurwitz chain then at least one of these is false:

\begin{enumerate}
\item{$|A^{\ast}q^{\ast}-B^{\ast}p^{\ast}| = 1$}
\item{$f(U)$ and $f(V)$ lie on opposite sides of $\theta$.}
\end{enumerate}

We test both conditions. Assume $|A^{\ast}q^{\ast}-B^{\ast}p^{\ast}| \ne 1$. Because $|Aq-Bp| = 1$, then $|A^{\ast}q^{\ast}-B^{\ast}p^{\ast}| = 1$ for all pairs of $\{f(W),f(X),f(Y),f(Z),\}$ except the pair $\{f(W),f(Z)\}$. But Theorem \ref{notzw} has already shown that neither $zw$ nor $wz$ belong to $\mathcal{F}_{2}(\Lambda_{\theta})$, therefore condition 1 is true, that is, $|A^{\ast}q^{\ast}-B^{\ast}p^{\ast}| = 1$.

Now assume that $f(U)$ and $f(V)$ are both greater than $\theta$. 
Let $\delta_{\theta} (n) =U$, $\delta_{\theta} (n+1) =V$, with $U> V$ (otherwise relabel $U\leftrightarrow V$). 
 If $uv \in \mathcal{F}_{2}(\Lambda_{\theta})$, then we have contiguous elements $s_{n}$, $s_{n}+U, s_{n}+U+V$.
 If $s_{n} =a+b\theta$, then $b\ge B^{\ast}+q^{\ast}$, so $s_{n}+V \in s_{\theta}$, but $s_{n} < s_{n}+V < s_{n}+U+V$, which implies that $U = V$, a contradiction. Similarly when $f(U)$ and $f(V)$ are both less than $\theta$. Therefore both conditions are true and so $uv \in \mathcal{F}_{2}(\Lambda_{\theta})$ implies that $(f(U),f(V))$ is a member of the Hurwitz chain.
 
 $\Leftarrow$
 Now assume   $(f(U),f(V))$ is a member of the Hurwitz chain for $\theta$.
We show that $uv \in \mathcal{F}_{2}(\Lambda_{\theta})$. We let $(f(U),f(V)) = (f(X),f(Y))$, remembering that $(f(X),f(Y))$ may represent {\it any} member of the Hurwitz chain for $\theta$. The limits on $S(\theta)x$ in Theorem \ref{howmany} will be utilized to show that $\max({S(\theta)}_{x}) + X \in {S(\theta)}_{y}$ and so $xy = uv \in \mathcal{F}_{2}(\Lambda_{\theta})$. Let $P/Q$ be the principal convergent that immediately follows $p/q$. Again letting $X = B\theta - A$,  $Y = p-q\theta$,
 \[ \max({S(\theta)}_{x}) + X = (A+p-1 + (q-1)\theta) + (B\theta-A) = p-1 + (B+q-1)\theta.
 \]
 Then 
 \[ p-1 + (B+q-1)\theta \in {S(\theta)})_{y}= \{a+b\theta \mid 0 \le a < P; q \le b < q +Q\}\]
 because $0 \le p-1 < P$ and $q \le (B+q-1) < q + Q$. Then $uv \in \mathcal{F}_{2}(\Lambda_{\theta}$. We can also show that $\max({S(\theta)}_{x}) - Y \in {S(\theta)}_{y}$ and so $vu$ also belongs to $\mathcal{F}_{2}(\Lambda_{\theta})$.
 \end{proof}

For a fixed $(f(X),f(Y))$, members of the Hurwitz chain for $\theta$ always include both $(f(Y),f(Z))$ and $(f(X),f(Y))$. If $f(W)$ is on the same side of $\theta$ as $f(X)$, then $(f(W),f(Y))$ belongs to the Hurwitz chain. Otherwise, $(f(W),f(X))$ does.
Then there are six two-letter square-free factors over $\{w,x,y,z\}$: $yz$, $xy$, $wx$ (or $wy$), and their reversals.

\begin{corollary}\label{3limit}
Palindromes in $\Lambda_{\theta}$ are over alphabets of at most three letters.
\end{corollary}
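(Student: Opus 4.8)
The plan is to encode the two‑letter factors of $\Lambda_\theta$ in a graph and exploit its tree structure. Define the \emph{Hurwitz graph} $G$ on the alphabet $\mathscr{A}$: join distinct letters $u,v$ by an edge exactly when $uv\in\mathcal{F}_2(\Lambda_\theta)$, and record a loop at $x$ when $x^2\in\mathcal{F}_2(\Lambda_\theta)$. By Theorem~\ref{ishurw} the edges of $G$ are precisely the Hurwitz pairs for $\theta$ (each pair of convergents identified with the pair of letters $\lambda|A-B\theta|$ it carries), and by Theorem~\ref{power} the loops sit exactly at the principal convergents. Since consecutive pairs of the Hurwitz chain share exactly one entry and each successive pair contributes precisely one new vertex — its mediant — the graph $G$ is a tree; tracking which convergents survive through several chain‑pairs shows it is a caterpillar whose spine is the sequence of principal convergents $c_0,c_1,\dots$ and whose legs at $c_k$ are the intermediate convergents lying between $c_k$ and $c_{k+1}$. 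Reading any factor of $\Lambda_\theta$ letter by letter is a walk in $G$ (along an edge, or staying put via a loop), so the set of letters occurring in a given factor spans a connected subtree of $G$.

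Next I would isolate the use of Theorem~\ref{notzw}. Fix a Hurwitz pair with its associated letters $w,x,y,z$ as in Figure~\ref{WandfW}; here $z\succ w$, because $f(Z)=f(X)\oplus f(Y)$ has strictly larger denominator than $f(W)=f(X)\ominus f(Y)$. Theorem~\ref{notzw}, together with the remark following it, gives $z'\,r\,w\notin\mathcal{F}(\Lambda_\theta)$ for every letter $z'\succeq z$ and every (possibly empty) word $r$. I claim that no \emph{palindrome} $P$ can contain $w$ together with a letter $\succeq z$. Indeed, the positions of $P$ carrying $w$ form a set $S$ symmetric under $i\mapsto |P|+1-i$, and those carrying a letter $\succeq z$ form a symmetric set $T$ disjoint from $S$ (as $w\prec z$); if no occurrence of a $\succeq z$‑letter preceded an occurrence of $w$, then $\min T>\max S$, whence by symmetry $\max T<\min S$, which is impossible. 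So $P$ contains a factor $z'\,r\,w$ with $z'\succeq z$, a contradiction. Hence the letter set $L$ of any palindrome satisfies: whenever $a\in L$ and some Hurwitz pair has $a$ as its $W$‑letter, the corresponding $Z$‑letter exceeds $\max L$.

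The remaining ingredient is, for each letter $a$, the smallest $Z$‑letter $z(a)$ over Hurwitz pairs having $a$ as $W$‑letter. Following the mediant recursion of the Hurwitz chain, a non‑principal convergent occurs in a single chain‑pair and is the $W$‑entry of exactly the next pair, whose mediant lies two steps further along, while a principal convergent $c_k$ is the $W$‑entry of exactly the pair succeeding $(c_k,c_{k+1})$. One then finds $z(a)=a+2$ when $a$ is the letter of an intermediate convergent, and $z(\ell_k)=\ell_{k+1}+2\ (\ge \ell_k+3)$ when $a=\ell_k$ is the letter of a principal convergent, where $\ell_0<\ell_1<\cdots$ are the principal‑convergent letters; the lone exception, the letter $\lambda(\delta_\theta(0))=0$, has $z(0)=3$. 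Now suppose a palindrome used at least four distinct letters, and pick inside its letter set a four‑vertex connected subtree of $G$, say with letters $v_1<v_2<v_3<v_4$; every convergent is principal or intermediate, so one of the following holds. If $v_1$ is an intermediate‑convergent letter, then $z(v_1)=v_1+2\le v_4$; if $v_1=0$, then $z(0)=3\le v_4$; if $v_1=\ell_k$ is a principal‑convergent letter, then either $v_2$ is a leg at $v_1$ (an intermediate convergent) with $z(v_2)=v_2+2\le v_4$, or, by connectedness of the subtree, $v_2=\ell_{k+1}$ and $z(v_1)=v_2+2\le v_4$. In every case some $a\in\{v_1,\dots,v_4\}$ has $z(a)\le v_4\le\max L$, contradicting the previous paragraph. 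Therefore every palindrome in $\Lambda_\theta$ uses at most three distinct letters.

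I expect the main obstacle to be the evaluation of the quantities $z(a)$ in the third step: turning the informal mediant picture of the Hurwitz chain into a rigorous argument requires keeping straight which entry of each chain‑pair plays the role of $X$, $Y$, $W$, $Z$, handling the transitions between consecutive ``blocks'' of the chain cleanly, and dealing with the formal convergents $1/0$ and $0/1$ at the very start (which is exactly why the letter $0$ is exceptional). Once that bookkeeping is in place, the facts that $G$ is a caterpillar, the symmetry argument of the second step, and the four‑case endgame are all essentially routine.
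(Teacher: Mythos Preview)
Your proof is correct and rests on the same key lemma as the paper, Theorem~\ref{notzw}, but the two arguments differ markedly in how much they actually prove. The paper's proof is a two-sentence sketch: from $z'rw\notin\mathcal{F}(\Lambda_\theta)$ for $z'\succeq z$ it observes that neither $z'rw\widetilde{r}z'$ nor $w\widetilde{r}z'rw$ can be a factor, and then simply asserts that every palindrome lies over some three-letter alphabet $\{x,y,z\}$. What is left implicit is precisely the step you work out in detail---namely, why any four letters appearing in a palindromic factor must contain a forbidden $(w,z')$ pair. Your Hurwitz-graph framing (edges $=$ Hurwitz pairs, yielding a caterpillar with spine the principal-convergent letters), the observation that the letter set of any factor is a connected subtree, your computation of $z(a)$ for principal and intermediate letters, and the four-case endgame together supply exactly this missing verification. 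So the route is the same in spirit, but your version is substantially more rigorous; the paper's version relies on the reader to reconstruct the combinatorics you make explicit.

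One small point: your justification that $z\succ w$ via ``$f(Z)$ has strictly larger denominator than $f(W)$'' is not quite airtight, since distinct convergents can share a denominator (e.g.\ $1/1$ and $2/1$). The cleaner argument is the one implicit in your own chain description: $z=\max(x,y)+1$, while $f(W)$ is the element discarded at the previous step of the Hurwitz chain, so $\max(x,y)\ge w+1$ and hence $z\ge w+2$. This also immediately gives the inequality $z(a)\ge a+2$ that your case analysis uses.
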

\begin{proof}
By Theorem \ref{notzw} it follows that if $z^{\prime} \succeq z$ then $z^{\prime}rw \notin \mathcal{F}(\Lambda_{\theta})$ and so $z^{\prime}rw\widetilde{r}z^{\prime}\notin \mathcal{F}(\Lambda_{\theta})$ and $w\widetilde{r}z^{\prime}rw\notin \mathcal{F}(\Lambda_{\theta})$. Then any palindrome in $\Lambda_{\theta}$ is over some alphabet $\mathscr{X} = \{x,y,z\} \subset \mathscr{A}$.
\end{proof}

\section{Rich words}\label{richsec}

The topic of palindromic complexity \cite{all03, dro95, dro99} concerns the question of the number of distinct palindromes contained in a word of length $k$. Words that achieve the maximal number of palindromes have been known alternately as ``rich'' words \cite{all03, buc09a, gle09b} or ``full'' words \cite{blo11, brl04}. We will use the term ``rich'' here, as it seems to be found more in current usage. It is known \cite{dro01} that a word of length $k$
contains at most $k +1$ distinct palindromes (including the empty word). 
A word that achieves this maximal number of distinct palindromes is a {\it rich} word 
\cite{gle09b}.
Alternately, a rich word contains $k$ non-empty distinct palindromes. Thus, the word ``large'' is rich, as each of its five letters is a palindrome. The word ``small'' is also rich, because ``s'', ``m'', ``a'' ``l'' and ``ll'' constitute five non-empty palindromes. On the other hand, ``edge'' is not rich as it contains only three distinct non-empty palindromes. All words of up to length three are rich.

Droubay, Justin, and Pirillo \cite{dro01} demonstrate an equivalent definition of rich words:
A word is rich if each of its prefixes terminates with a uni-occurrent palindromic suffix (ups).
For example, the relevant prefixes of the rich word ``indeed'' are ``{\it i}'', ``i{\it n}'', ``in{\it d}'', ``ind{\it e}'', ``ind{\it ee}'', ``in{\it deed}'', and the empty word. The italics capture the uni-occurrent palindromic suffixes. Because the reverse of a rich word is also rich, we can also say that a word is rich if each suffix contains a uni-occurrent palindromic prefix.

It follows that a word is rich if all of its factors are rich, which extends the concept to include infinite rich words. A right-infinite word is rich if each prefix of the word terminates in a ups. Recent studies of rich words have examined Sturmian \cite{luc08}, episturmian \cite{gle09a}, and other words on finite alphabets. The results here extend the topic to the Lambda word, a word defined over an an infinite alphabet. By way of motivation, we again examine $\Lambda_{\vartheta}$ where ${\vartheta} = \log_{2} 3$. The beginning of this sequence was shown in Figure \ref{biggraph}. As Figure \ref{tw} shows, all of its first twelve prefixes end with a ups.

\begin{figure}[h]
\[
\begin{array}{r|l}
0&\underline{0}\\
1&0,\ \underline{1}\\
2&0,\ 1,\ \underline{2}\\
3&0,\ \underline{1,\ 2,\ 1}\\
4&0,\ 1,\ \underline{2,\ 1,\ 2}\\
5&0,\ 1,\ 2,\ 1,\ 2,\ \underline{3}\\
6&0,\ 1,\ 2,\ 1,\ \underline{2,\ 3,\ 2}\\
7&0,\ 1,\ 2,\ 1,\ 2,\ 3,\ \underline{2,\ 2}\\
8&0,\ 1,\ 2,\ 1,\ 2,\ \underline{3,\ 2,\ 2,\ 3}\\
9&0,\ 1,\ 2,\ 1,\ \underline{2,\ 3,\ 2,\ 2,\ 3,\ 2}\\
10&0,\ 1,\ 2,\ 1,\ 2,\ 3,\ 2,\ 2,\ \underline{3,\ 2,\ 3}\\
11& 0,\ 1,\ 2,\ 1,\ 2,\ 3,\ 2,\ 2,\ 3,\ 2,\ 3,\ \underline{4}\\
\end{array}
\]
\caption{Uni-occurrent palindromic suffixes in the first twelve prefixes of $\Lambda_{\vartheta}$.\label{tw}} 
\end{figure}

The following proposition and four lemmas set up Theorem \ref{lamisrich}, which proves that $\Lambda_{\theta}$ is rich. 
We will show that every prefix of $\Lambda_{\theta}$ is terminated by a ups.

\subsection{Additive complements}
Two definitions are required:
If $c+c^{\prime} = h$, then $c^{\prime}$ is the {\it complement} of $c$ with respect to $h$.
A strictly increasing sequence $C = (c_{1}, \ldots , c_{n})$ where complements are defined with respect to $c_{1}+c_{n}$, is said to be a {\it sequence of additive complements} (or simply, ``sequence of complements'') if $c^{\prime} \in C$ whenever $c \in C$.
It follows that if $c_{a} < c_{b}$, then ${c_{a}}^{\prime} > {c_{b}}^{\prime}$ and also that ${c_{k}}^{\prime} = {c_{n+1-k}}$ where $1 \le k \le n$.

\begin{proposition}\label{palcomp} \emph{(Trivial)}.
 $C$ is a sequence of complements 
if{f} $\Delta C$ is a palindrome.
\end{proposition}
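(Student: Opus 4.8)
The plan is to unwind both definitions and observe that they are literally the same statement, written in two languages. Write $C = (c_1, \ldots, c_n)$ with $h = c_1 + c_n$, and write $\Delta C = (d_1, \ldots, d_{n-1})$ where $d_k = c_{k+1} - c_k$. The reversal of $\Delta C$ has $k$-th entry $d_{n-k} = c_{n-k+1} - c_{n-k}$. So $\Delta C$ is a palindrome precisely when $c_{k+1} - c_k = c_{n-k+1} - c_{n-k}$ for every $k$ with $1 \le k \le n-1$.

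First I would record the elementary telescoping identity: for each index $k$, $c_k + c_{n+1-k} = c_1 + c_n$ holds for all $k$ if and only if the increments match symmetrically. Concretely, set $e_k = c_k + c_{n+1-k}$; then $e_{k+1} - e_k = (c_{k+1} - c_k) - (c_{n-k+1} - c_{n-k}) = d_k - d_{n-k}$. Since $e_1 = h$ by definition of $h$, the sequence $(e_k)$ is constantly equal to $h$ if and only if every difference $e_{k+1} - e_k$ vanishes, i.e. $d_k = d_{n-k}$ for all $k$, i.e. $\Delta C$ is a palindrome. This single computation gives both directions at once.

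Then I would translate: $C$ being a sequence of complements means $c' \in C$ whenever $c \in C$, where $c' = h - c$; since $C$ is strictly increasing, the complement map is an order-reversing involution on $C$, forcing $c_k' = c_{n+1-k}$, which is exactly $e_k = h$ for all $k$. Conversely if $e_k = h$ for all $k$ then $c_k' = c_{n+1-k} \in C$, so $C$ is closed under complementation. Combining this equivalence with the one in the previous paragraph closes the proof. I expect no real obstacle here — the only thing to be slightly careful about is the indexing convention (that $\Delta C$ has $n-1$ entries and that the reversal sends index $k$ to $n-k$), and the boundary check that $e_1 = e_n = h$ automatically, which is immediate. This is why the proposition is flagged as trivial.
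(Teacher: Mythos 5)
Your proof is correct, and it takes the only natural route: the paper itself offers no argument beyond ``immediate from the definitions'' (having already noted that closure under complementation forces $c_k^{\prime} = c_{n+1-k}$), and your telescoping computation with $e_k = c_k + c_{n+1-k}$ is exactly the fleshed-out version of that observation.
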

The proof is immediate from the definitions.

\subsection{Beatty sequences}
Setting $k \in \Nset$ allows for two values that approximate $\theta$: \[\dfrac{ \lfloor k\theta \rfloor }{k} < \theta < \dfrac{k}{ \lfloor k{(\theta}^{-1}) \rfloor }.\] 

These approximations include the ``best approximations of the second kind'' but form a larger set, comprised of the best left approximation for every denominator $k$, and the best right one for every numerator $k$.
We also have
\begin{equation}\label{klims}
\begin{array}{ccc}
\lfloor k\theta \rfloor &< &k\theta\\
\lfloor k{(\theta}^{-1})\rfloor \theta & < &k.
\end{array}
\end{equation}
Summing the values in (\ref{klims}) gives rise to two integer sequences:
\[
\begin{array}{ccl}
\vspace{1 mm}
K^{(-)} &= &\lfloor k\theta \rfloor + k\\
\vspace{1 mm}
K^{(+)} &= &\lfloor k{(\theta}^{-1}) \rfloor + k\\
\vspace{1 mm}
\end{array}
\] 
where $k$ runs through the positive integers.

For the next lemma, we utilize the notion of complementary Beatty sequences, the word ``complement'' here used in a sense distinct from above. {A Beatty sequence \cite{bea27, gar89} is an infinite sequence of integers $\lfloor k\alpha \rfloor$ where $\alpha$ is an irrational number. Complementary Beatty Sequences $\lfloor k\alpha \rfloor$ and $\lfloor k\beta \rfloor$ are formed whenever ${1}/{\alpha} +{1}/{\beta} = 1$. Any positive integer belongs to exactly one of $\lfloor k\alpha \rfloor$ or to $\lfloor k\beta \rfloor$.}

\begin{lemma}\label{beat}
$K^{(-)}$ and $K^{(+)}$ form a complementary pair of Beatty sequences. 
\end{lemma}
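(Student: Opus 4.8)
The plan is to exhibit an explicit irrational $\alpha$ so that $K^{(-)} = (\lfloor k\alpha\rfloor)_{k\ge 1}$, and then let $\beta$ be its Beatty conjugate ($1/\alpha+1/\beta=1$) and check that $K^{(+)} = (\lfloor k\beta\rfloor)_{k\ge 1}$; Beatty's theorem then gives the complementarity for free. The natural candidate is $\alpha = 1+\theta$, since $K^{(-)}_k = \lfloor k\theta\rfloor + k$; one would like this to equal $\lfloor k(1+\theta)\rfloor = k + \lfloor k\theta\rfloor$, which is an identity because $k$ is an integer. So $K^{(-)}_k = \lfloor k(1+\theta)\rfloor$ exactly, and $\alpha = 1+\theta$ is irrational with $\alpha>2>1$, as required for a Beatty sequence.

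Next I would compute the conjugate $\beta$ from $1/\alpha + 1/\beta = 1$, i.e. $\beta = \alpha/(\alpha-1) = (1+\theta)/\theta = 1 + \theta^{-1}$. The remaining task is to verify $K^{(+)}_k = \lfloor k(1+\theta^{-1})\rfloor$. By the same integer-shift identity, $\lfloor k(1+\theta^{-1})\rfloor = k + \lfloor k\theta^{-1}\rfloor$, and this is precisely the definition $K^{(+)}_k = \lfloor k(\theta^{-1})\rfloor + k$. Since $\theta^{-1}$ is irrational, $\beta = 1+\theta^{-1}$ is irrational, and $1 < \theta < 2$ gives $1/2 < \theta^{-1} < 1$, so $\beta \in (3/2, 2)$, again a legitimate Beatty parameter. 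One then checks $1/\alpha + 1/\beta = \frac{1}{1+\theta} + \frac{\theta}{1+\theta} = 1$, so $(\alpha,\beta)$ is a complementary pair and Beatty's theorem applies directly: every positive integer lies in exactly one of $K^{(-)}$, $K^{(+)}$.

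The argument is almost entirely bookkeeping; the only point requiring the slightest care — and the place I'd expect a careless reader to stumble — is making sure the floor identity $\lfloor k\gamma\rfloor + k = \lfloor k(\gamma+1)\rfloor$ is invoked correctly (it is valid precisely because the shift $k$ is an integer, so it passes through the floor), and that the irrationality and the range conditions on $\alpha=1+\theta$ and $\beta=1+\theta^{-1}$ are genuinely met under the standing hypothesis $1<\theta<2$. Once those are in hand, the conclusion is immediate from the classical Beatty theorem quoted just before the lemma statement, so no further estimation is needed.
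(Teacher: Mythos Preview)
Your proof is correct and follows essentially the same route as the paper: set $\alpha = 1+\theta$, compute its Beatty conjugate $\beta = (1+\theta)/\theta = 1+\theta^{-1}$, and verify via the integer-shift identity $\lfloor k\gamma\rfloor + k = \lfloor k(\gamma+1)\rfloor$ that $K^{(-)}$ and $K^{(+)}$ are exactly the Beatty sequences for $\alpha$ and $\beta$. You are slightly more careful than the paper in explicitly checking the irrationality and range conditions on $\alpha,\beta$, but otherwise the arguments coincide.
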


\begin{proof}
If $\alpha = (\theta+1)$, we get the Beatty Sequence $\lfloor k\alpha \rfloor = \lfloor k(\theta + 1)\rfloor = \lfloor k\theta +k \rfloor = \lfloor k \theta \rfloor +k = K^{(-)}$.
Solving ${1}/{(\theta+1)} +{1}/{\beta} = 1$ for $\beta$ gives $\frac{\theta+1}{\theta}$, which yields Beatty Sequence $\lfloor k\beta \rfloor =\lfloor k ( \frac{\theta+1}{\theta}) \rfloor = \lfloor \frac{k{\theta}}{\theta} + \frac{k}{\theta} \rfloor = k + \lfloor \frac{k}{\theta} \rfloor = K^{(+)}$. Consequently, every positive integer belongs either to $K^{(-)}$ or to $K^{(+)}$.
\end{proof}

\subsection{Nuclear sequences and palindromes}
We utilize the limits shown in (\ref{klims}) to form two classes of contiguous subsequences of $S_{\theta}$, indexed by $K^{(-)}$ or $K^{(+)}$:
\[
\begin{array}{l}
N_{K^{(-)}} = (s_{n} \mid \lfloor k\theta \rfloor \le s_{n} \le k \theta)\\
N_{K^{(+)}} =(s_{n} \mid \lfloor k{(\theta}^{-1})\rfloor \theta \le s_{n} \le k )
\end{array}
\]

Letting $K = K^{(-)} \cup K^{(+)}$, then $N_{K}$ refers to a member of either of these types of ``nuclear'' sequences. (It is not difficult to show that the nuclear sequences cover $S_{\theta}$, i.e., for all $n$, there is at least one $K$ such that $s_{n} \in N_{K}$. The result is not required in what follows.)
 
\begin{lemma}\label{nkcomp}
$N_{K}$ is a sequence of complements.
\end{lemma}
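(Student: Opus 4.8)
\textbf{Proof proposal for Lemma \ref{nkcomp}.}

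The plan is to show that each nuclear sequence $N_K$ is closed under taking additive complements with respect to the sum of its first and last elements, then invoke the definition directly. I would handle the two cases $K = K^{(-)}$ and $K = K^{(+)}$ separately, since they are symmetric under $\theta \leftrightarrow 1/\theta$; it suffices to treat $K = K^{(-)}$, where
\[
N_{K^{(-)}} = (s_n \mid \lfloor k\theta \rfloor \le s_n \le k\theta)
\]
for a fixed $k$. The first and last elements of this contiguous block of $S_\theta$ are, respectively, the smallest element of $S(\theta)$ that is $\ge \lfloor k\theta\rfloor$ and the largest that is $\le k\theta$. Since $\lfloor k\theta \rfloor = \lfloor k\theta\rfloor + 0\cdot\theta \in S(\theta)$ and $k\theta = 0 + k\theta \in S(\theta)$, both endpoints lie in $S(\theta)$, so the first element is exactly $\lfloor k\theta\rfloor$ and the last is exactly $k\theta$. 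Hence the complement sum is $h = \lfloor k\theta\rfloor + k\theta$.

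The crux is the following claim: if $s_n = i + j\theta$ lies in the block, i.e. $\lfloor k\theta\rfloor \le i + j\theta \le k\theta$, then its complement $h - s_n = (\lfloor k\theta\rfloor - i) + (k-j)\theta$ also lies in $S(\theta)$ and in the same block. For membership in $S(\theta)$ I must check that both coordinates are non-negative integers: $k - j \ge 0$ follows from $j\theta \le k\theta$; and $\lfloor k\theta\rfloor - i \ge 0$ follows because $i \le i + j\theta \le k\theta$ forces $i \le \lfloor k\theta\rfloor$ (as $i$ is an integer). For membership in the block, the two inequalities defining the block are exchanged by the complement map: $i + j\theta \le k\theta$ is equivalent to $\lfloor k\theta\rfloor \le (\lfloor k\theta\rfloor - i) + (k-j)\theta$, and $\lfloor k\theta\rfloor \le i + j\theta$ is equivalent to $(\lfloor k\theta\rfloor - i) + (k-j)\theta \le k\theta$. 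So $c \mapsto h - c$ maps the underlying set of $N_{K^{(-)}}$ bijectively to itself, which is precisely the statement that $N_{K^{(-)}}$ is a sequence of complements. The case $K = K^{(+)}$ is identical after substituting $1/\theta$ for $\theta$ and using that $\lfloor k(\theta^{-1})\rfloor\theta = 0 + \lfloor k/\theta\rfloor\theta \in S(\theta)$ and $k = k + 0\cdot\theta \in S(\theta)$ are the endpoints, with complement sum $k + \lfloor k(\theta^{-1})\rfloor\theta$.

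I expect the only mild subtlety to be the bookkeeping that the endpoints of each nuclear block are attained exactly by elements of $S(\theta)$ rather than merely bounded by the Beatty values — but this is immediate since $\lfloor k\theta\rfloor$, $k\theta$, $\lfloor k/\theta\rfloor\theta$, and $k$ are all themselves in $S(\theta)$, so no element of $S_\theta$ can slip strictly between them and the block boundary. Everything else is a one-line algebraic verification that the affine involution $(i,j) \mapsto (\lfloor k\theta\rfloor - i,\, k - j)$ (respectively $(i,j)\mapsto (k-i,\,\lfloor k/\theta\rfloor - j)$) preserves $\Nset_0 \times \Nset_0$ on the relevant region and swaps the two defining inequalities. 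Combined with Proposition \ref{palcomp}, this lemma will feed directly into the richness argument by showing that the difference sequence of each nuclear block — hence the corresponding factor of $\Lambda_\theta$ — is a palindrome.
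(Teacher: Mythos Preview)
Your proposal is correct and follows essentially the same approach as the paper: identify the complement sum as $\lfloor k\theta\rfloor + k\theta$ (respectively $k + \lfloor k/\theta\rfloor\theta$), bound the coordinates $i,j$ of any $s_n$ in the block to show the complement has non-negative coordinates and hence lies in $S(\theta)$, and note that the defining inequalities of the block are exchanged by the complement map. Your treatment is slightly more explicit than the paper's in naming the endpoints and framing the map as an affine involution on $(i,j)$, but the logical content is the same.
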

\begin{proof}

 We prove that both $N_{K^{(-)}}$ and $N_{K^{(+)}}$ are complementary sequences. We begin with 
 $N_{K^{(-)}} = (s_{n} \mid \lfloor k\theta \rfloor \le s_{n} \le k \theta) $.
 Let $s^{\prime}_{n}$ represent the complement of $s_{n}$. That is, 
\begin{equation}\label{sumdef}
 s_{n}+s^{\prime}_{n} = \lfloor{k\theta}\rfloor + k\theta.
\end{equation}

Clearly if $s_{n} \in N_{K^{(-)}}$,
$
\lfloor k\theta \rfloor \le s^{\prime}_{n} \le k \theta.
$
Now we show that $s^{\prime}_{n} \in S_{\theta}$. Let $s_{n} = i+j\theta$, $s^{\prime}_{n} = i^{\prime}+j^{\prime}\theta$.
Then $\lfloor k\theta \rfloor \le i+j\theta \le k \theta$. This implies $\lfloor k\theta \rfloor = \lfloor i+j\theta \rfloor$, which gives 
 $i \le \lfloor k\theta \rfloor$ and $j \le k $.
Rewrite Equation (\ref{sumdef}) as 
\[
 i+i^{\prime} + (j+j^{\prime})\theta = \lfloor{k\theta}\rfloor + k\theta
\]
 
Because $\theta$ is irrational,
\[
\begin{array}{c}
0\le i+i^{\prime} = \lfloor{k\theta}\rfloor \\
0\le j+j^{\prime} = k.
\end{array}
\]
Then, $0\le i^{\prime} \le \lfloor{k\theta}\rfloor$ and $0\le j^{\prime} \le k$, which means $s^{\prime}_{n} = i^{\prime}+j^{\prime}\theta \in S_{\theta}$ and therefore $s^{\prime}_{n} \in N_{K^{(-)}}$ whenever $s_{n}\in N_{K^{(-)}}$. Then $N_{K^{(-)}}$ is a sequence of complements.

Next we prove the same for $N_{K^{(+)}}=(s_{n} \mid \lfloor k{(\theta}^{-1})\rfloor \theta \le s_{n} \le k )$.
Rewrite Equation (\ref{sumdef}) as

\begin{equation}\label{sumdef2}
 s_{n}+s^{\prime}_{n} = \lfloor k{(\theta}^{-1})\rfloor \theta + k.
\end{equation}
and so if $s_{n} \in N_{K^{(+)}}$, 
$
\lfloor k{(\theta}^{-1})\rfloor \theta \le s^{\prime}_{n} \le k .
$

If $s_{n} = i+j\theta \in N_{K^{(+)}}$ we have $i+j\theta <k$, and so $i \le k$. Dividing the inequality $\lfloor k{(\theta}^{-1})\rfloor \theta \le i+j\theta \le k$ through by $\theta$ we get $\lfloor \frac{k}{\theta}\rfloor \le \frac{i}{\theta} +j \le \frac{k}{\theta}$, and so $\lfloor \frac{k}{\theta}\rfloor =\lfloor \frac{i}{\theta} +j \rfloor $, or $j \le \lfloor \frac{k}{\theta}\rfloor$. Then
\[
\begin{array}{c}
0\le i+i^{\prime} = k \\
0\le j+j^{\prime} = \lfloor k{(\theta}^{-1})\rfloor.
\end{array}
\]

Therefore both $i^{\prime} \ge 0$ and $j^{\prime} \ge 0$, and so $s^{\prime}_{n} = i^{\prime}+j^{\prime}\theta \in S_{\theta}$ and $N_{K^{(+)}}$ is also a sequence of complements. Thus for all $K$, $N_{K}$ is a sequence of complements of $S_{\theta}$. Then, by Proposition \ref{palcomp}, $\Delta N_{K}$ is a palindrome. 
\end{proof}
We refer to $\Delta N_{K}$ as a {\it nuclear} palindrome. 

\subsection{Maximal sequences and palindromes}
In general, it is possible to expand a nuclear palindrome, adding elements symmetrically to the left and the right as we will demonstrate below. First we simplify our notation and let
\[
A =
\begin{cases}
 \min (N_{K^{(-)}}) \\
 \max (N_{K^{(+)}})
\end{cases}
B\theta =
\begin{cases}
 \max (N_{K^{(-)}}) \\
 \min (N_{K^{(+)}})
\end{cases}
\]

It follows that $A+B\theta = \min (N_{K}) + \max (N_{k})$ that is, complements in $N_{K}$ are defined with respect to $A+B\theta$. Furthermore, $A+B=K$. We can, in general, find longer complementary sequences of $A+B\theta$. Whenever $ i+j\theta \in N_{K}$, $0 \le i \le A$ and $0 \le j \le B$. However, these limits hold on $i$ and $j$ whenever $s_{n} < \max(N_{K+1})$. Then let 
\[
\begin{array}{ccl}
T_{K} &= &\left(s_{n} \mid \max(N_{K}) \le s_{n} < \max(N_{K+1})\right)\\
\vspace{1 pt}\\
T^{\prime}_{K} &=& \left(s_{n}\mid s^{\prime}_{n} \in T_{K} \right)
\end{array}
\]
where, again, complements are defined with respect to $A+B\theta$. Clearly, the sequences $(T_{K})$ and $(T^{\prime}_{K})$ have the same number of elements and $\min(T^{\prime}_{K}) + \max (T_{K}) = A+B\theta$.
Let $C_{K} = T^{\prime}_{K} \cup N_{k} \cup T_{K}$. We call $C_{K}$ a ``maximal'' sequence.

\begin{lemma}\label{ciscomp}
$C_{K}$ is a sequence of complements.
\end{lemma}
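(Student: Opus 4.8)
The plan is to verify directly the two requirements in the definition of a sequence of additive complements for $C_K$: that its members, listed in increasing order, form a strictly increasing sequence, and that the set is closed under the complement map (the map that subtracts a member from the sum of the first and last members). I will first argue that this reference sum equals $A+B\theta$, exactly as for $N_K$, and that closure then follows essentially formally from the definition of $T'_K$ and from Lemma \ref{nkcomp}. The one substantive point is the assertion made in the text just before the statement, that the coordinate bounds $0\le i\le A$ and $0\le j\le B$ valid on $N_K$ persist for every $s_n=i+j\theta$ with $s_n<\max(N_{K+1})$; granting this, $T'_K\subseteq S_\theta$ and everything else is bookkeeping.

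The heart of the argument is to pin down $\max(N_{K+1})$. Among the elements of $S(\theta)$, the smallest whose first coordinate exceeds $A$ is $A+1$, and the smallest whose second coordinate exceeds $B$ is $(B+1)\theta$; hence it suffices to show $\max(N_{K+1})=\min(A+1,(B+1)\theta)$. For this I would invoke Lemma \ref{beat}: the integer $K+1$ lies in exactly one of the complementary Beatty sequences $K^{(-)}$ and $K^{(+)}$, and a short estimate using $\lfloor k\theta\rfloor$, $\lfloor k/\theta\rfloor$ and $1<\theta<2$ identifies the parameter $k'$ of the nuclear sequence $N_{K+1}$ and shows that its maximum is $k'\theta=(B+1)\theta$ when $N_{K+1}$ is of type $K^{(-)}$ and $k'=A+1$ when it is of type $K^{(+)}$ --- in every case $\min(A+1,(B+1)\theta)$. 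Consequently each $s_n\in T_K$ satisfies $s_n<A+1$ and $s_n<(B+1)\theta$, forcing $i\le A$ and $j\le B$, so the complement $s'_n=(A-i)+(B-j)\theta$ has non-negative integer coordinates and therefore lies in $S_\theta$. Thus $T'_K\subseteq S_\theta$, so $C_K\subseteq S_\theta$ and the sorted list of $C_K$ is automatically strictly increasing. I expect the case analysis on the type of $N_{K+1}$ to be the main obstacle.

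It remains to locate the endpoints of $C_K$ and check complement-closure. Since $\max(N_{K+1})>\max(N_K)$ (from the value just computed), the element $\max(N_K)$ belongs to $T_K$, so $\max(T_K)\ge\max(N_K)$; applying the order-reversing complement with respect to $A+B\theta=\min(N_K)+\max(N_K)$ gives $\min(T'_K)\le\min(N_K)$. Hence in $C_K=T'_K\cup N_K\cup T_K$ the least member is $\min(T'_K)$ and the greatest is $\max(T_K)$, and by the identity $\min(T'_K)+\max(T_K)=A+B\theta$ recorded before the statement, complements in $C_K$ are indeed taken with respect to $A+B\theta$. Now let $c\in C_K$. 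If $c\in N_K$, its complement lies in $N_K$ by Lemma \ref{nkcomp}. If $c\in T_K$, its complement lies in $T'_K$ by the definition of $T'_K$. If $c\in T'_K$, its complement lies in $T_K$, again by the definition of $T'_K$. In every case the complement of $c$ lies in $C_K$, so $C_K$ is a sequence of complements; by Proposition \ref{palcomp}, its difference sequence $\Delta C_K$ is a palindrome.
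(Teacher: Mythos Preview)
Your argument is correct and follows the same closure-under-complements outline as the paper, which dispatches the lemma in three sentences: if $s_n\in N_K$ then $s'_n\in N_K$ by Lemma~\ref{nkcomp}, and if $s_n\in T_K$ (resp.\ $T'_K$) then $s'_n\in T'_K$ (resp.\ $T_K$) by definition. The paper simply asserts the coordinate bounds $0\le i\le A$, $0\le j\le B$ for $s_n<\max(N_{K+1})$ in the text preceding the lemma and does not revisit them in the proof; you supply an actual justification, which is a genuine improvement.

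One remark on that justification: your Beatty-sequence case analysis for identifying $\max(N_{K+1})$ works, but there is a more direct route. By construction the set $\{\max(N_K):K\ge 1\}$ is exactly $\{k:k\ge 1\}\cup\{k\theta:k\ge 1\}$ listed in increasing order, and for any $K$ the integers $A$ and $B$ are precisely the largest integer and the largest multiplier of $\theta$ not exceeding $\max(N_K)$. Hence the next element of this ordered set is $\min(A+1,(B+1)\theta)$ without any appeal to which Beatty class $K+1$ falls into. From there your deduction of the coordinate bounds and of $T'_K\subseteq S_\theta$ is clean.
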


\begin{proof}
Like $N_{K}$, $C_{K}$ is a contiguous sequence of complements with respect to $A+B\theta$. If $s_{n} \in N_{K}$, then $s^{\prime}_{n} \in N_{K}$. If $s_{n} \in T_{K}$ (resp. $T^{\prime}_{K}$), then $s^{\prime}_{n} \in T^{\prime}_{K}$ (resp. $T_{K}$). 
\end{proof}
Consequently, by Proposition \ref{palcomp}, $\Delta C_{K}$ is a (maximal) palindrome.

\subsection{Medial subpalindromes}
For palindrome $p$, if $p = xp^{\prime} \widetilde{x}$ and $p^{\prime} \ne \epsilon$, then $p^{\prime}$ is a {\it medial subpalindrome} of $p$. (The term ``central factor'' is also used \cite[p. \ 2]{dro01}). It is easy to see that $p^{\prime}$ itself is a palindrome. Clearly, $\Delta (N_{K})$ is a medial subpalindrome of $\Delta (C_{K})$ because
\[\Delta (C_{K}) = \Delta T^{\prime}_{K} \cup \Delta (N_{K}) \cup \Delta T_{K} = \widetilde{\Delta T_{K}} \cup \Delta (N_{K}) \cup {\Delta T_{K}}.\]
 (The reversal of $\Delta T_{K}$ is  $\Delta T^{\prime}_{K}$.)

Let $\Delta M$ be a medial subpalindrome of $\Delta (C_{K})$ such that 
\begin{equation}\label{mlim}
 \Delta N_{K} \subseteq\Delta M \subseteq \Delta C_{K}.
\end{equation}

Let $S_{M} = \left(s_{m}\mid 0 \le s_{m} \le \max(M) \right )$. Then $\Delta S_{M}$ is a non-empty prefix of $\Delta S_{\theta}$ and $\max(M) > 0$.

\begin{lemma}\label{notc}
The palindrome $\Delta M$ is a ups of $\Delta S_{M}$.
\end{lemma}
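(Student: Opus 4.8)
\textbf{Proof proposal for Lemma \ref{notc}.}

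The plan is to show two things: first, that $\Delta M$ really occurs as a suffix of the prefix $\Delta S_M$ of $\Delta S_\theta$; and second, that this occurrence is the \emph{only} one, i.e. $\Delta M$ is uni-occurrent in $\Delta S_M$. For the first claim I would argue from the definitions: by hypothesis $\Delta M$ is a medial subpalindrome of $\Delta C_K$ with $\Delta N_K \subseteq \Delta M$, so $M$ is a contiguous subsequence of $S_\theta$ whose largest element is $\max(M)$; since $S_M = (s_m \mid 0 \le s_m \le \max(M))$ is by construction the initial segment of $S_\theta$ up through $\max(M)$, the difference word $\Delta S_M$ has $\Delta M$ as its terminal block, provided $\min(M)$ is the left endpoint of the window $M$ inside $S_\theta$. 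The fact that $\max(M) > 0$ (so $\Delta S_M$ is a non-empty prefix) is given. So the occurrence is immediate.

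The substance of the lemma is uni-occurrence. Suppose $\Delta M$ occurred a second time inside $\Delta S_M$, say as $\delta_\theta(n), \ldots, \delta_\theta(n+\ell-1)$ where $\ell = |M|$, ending at some $s_{n+\ell} \le \max(M)$ other than the terminal occurrence. Since $\Delta M$ is a palindrome and $M$ is a sequence of additive complements with respect to $A + B\theta$ (inherited from $C_K$ via Lemma \ref{ciscomp}), the sum of its first and last elements is controlled; more to the point, the width $\max(M) - \min(M)$ is a fixed value, and the second occurrence would be a translate of $M$ by some $g \in S_\theta$ with $0 < g$. The key structural fact I would exploit is that $\Delta M$ has $\Delta N_K$ as a medial subpalindrome, and $\Delta N_K = \Delta N_{K^{(-)}}$ or $\Delta N_{K^{(+)}}$ is pinned to the specific location where $s_n$ passes from below $\lfloor k\theta\rfloor$ / $k$ to above $k\theta$ / $\lfloor k(\theta^{-1})\rfloor\theta$ — these are threshold events governed by the Beatty sequences $K^{(-)}, K^{(+)}$ of Lemma \ref{beat}, and each value $K$ occurs exactly once. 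So the central block of any occurrence of $\Delta M$ forces the absolute position, ruling out a translate.

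Concretely, I would take the middle letter (or middle pair) of $\Delta M$: it equals the middle letter of $\Delta N_K$, which is the difference straddling the point where the partial sums cross the relevant Beatty value. A translated copy $M + g$ of $M$ would have to place its central difference at a point $s_m + g$ that is again such a crossing point; but translation by $g \in S_\theta$, $g>0$, strictly increases every element, and the next crossing point of the same ``type'' corresponds to $K+1$, whose associated window $N_{K+1}$ has \emph{strictly different} width than $N_K$ in general — and even when widths coincide, the letters $\lambda(\delta_\theta(\cdot))$ flanking it differ because the convergent data attached to those differences (via Theorem \ref{iffconv} and the continued-fraction description of $\lambda$ in \S\ref{morecf}) change. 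Hence no second occurrence of $\Delta M$ fits inside $S_M$, and $\Delta M$ is a ups of $\Delta S_M$.

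\textbf{Main obstacle.} The delicate point is the uni-occurrence argument: showing that the medial nuclear palindrome $\Delta N_K$ cannot reappear earlier as an embedded factor within $\Delta S_M$. This requires pinning down that the ``crossing events'' indexed by $K^{(-)}$ and $K^{(+)}$ are genuinely non-repeating as \emph{factors of the difference word} — not merely that the index values $K$ are distinct. I expect this to hinge on combining the complementarity of the Beatty sequences (Lemma \ref{beat}) with the no-recurrence statement for letters (Theorem \ref{howmany}, which gives $|\Lambda_\theta|_x = pq < \infty$) and with the monotone-but-not-strict decay of $\delta_\theta$; making that fully rigorous, rather than hand-waving "positions are forced," is where the real work lies.
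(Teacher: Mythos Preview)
Your overall strategy matches the paper's: show $\Delta M$ is a palindromic suffix of $\Delta S_M$, and then reduce uni-occurrence of $\Delta M$ to uni-occurrence of its medial subpalindrome $\Delta N_K$. That reduction is exactly what the paper does.

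Where your proposal diverges --- and where it has a genuine gap --- is in the argument that $\Delta N_K$ is uni-occurrent. You try to argue via ``crossing events'' and Beatty indices: a second copy of $\Delta N_K$ would have to sit at another nuclear window $N_{K'}$, and then you appeal to widths or flanking letters differing. But a second occurrence of $\Delta N_K$ as a \emph{factor of the difference word} need not be a nuclear sequence at all; it is simply some contiguous block $N'\subseteq S_\theta$ with $\Delta N'=\Delta N_K$, i.e.\ a real translate $N'=N_K+c$ with $c=i_0+j_0\theta$ for some integers $i_0,j_0$ (not necessarily both nonnegative). Your ``next crossing point corresponds to $K+1$'' step therefore does not apply, and the argument as written does not close.

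The paper's argument for this step is much simpler and avoids all of that machinery. The nuclear sequence $N_K$ has as its two endpoints the very special elements $A=A+0\cdot\theta$ and $B\theta=0+B\cdot\theta$. Moreover (this was set up just before the lemma), every $s_n=i+j\theta\in S_M$ satisfies $0\le i\le A$ and $0\le j\le B$, because $\max(M)<\max(N_{K+1})$. So if $N'=N_K+c\subseteq S_M$ with $c=i_0+j_0\theta$, then looking at the translate of $A+0\cdot\theta$ forces $A+i_0\le A$ and $j_0\ge 0$, while the translate of $0+B\cdot\theta$ forces $i_0\ge 0$ and $B+j_0\le B$; hence $i_0=j_0=0$ and $N'=N_K$. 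That is the entire content of the paper's sentence ``The maximal values for $i$ and $j\theta$ in $S_M$ are $A$ and $B\theta$.'' You identified the right obstacle, but the resolution is this extremal-coefficient observation about the endpoints of $N_K$, not an analysis of crossing points or letter counts.
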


\begin{proof}
Clearly, $\Delta N_{K}$ is a medial subpalindrome in $\Delta M$ and $\Delta M$ is a palindromic suffix of $\Delta S_{M}$. Therefore if $\Delta N_{K}$ is uni-occurrent in $\Delta S_{M}$ then $\Delta M$ is also. The maximal values for $i$ and $ j\theta$ in $S_{M}$ are $A$ and $B\theta$. Then $\Delta N_{K}$ is uni-occurrent in $\Delta S_{M}$ and $\Delta M$ is a ups in $\Delta S_{M}$.
\end{proof}

\begin{theorem}\label{lamisrich}
$\Lambda_{\theta}$ is a rich word.
\end{theorem}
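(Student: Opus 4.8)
The plan is to use the reformulation of Droubay, Justin, and Pirillo \cite{dro01}: a right-infinite word is rich exactly when every one of its prefixes terminates in a uni-occurrent palindromic suffix (ups). I will show that every prefix of $\Lambda_{\theta}$ ends in a ups, the ups being the $\lambda$-image of a suitable medial subpalindrome of one of the maximal palindromes $\Delta C_{K}$ constructed above. Fix $\ell \geq 1$. The prefix of $\Delta S_{\theta}$ of length $\ell$ is $(\delta_{\theta}(0),\ldots,\delta_{\theta}(\ell-1)) = \Delta S_{M}$ for $S_{M} = (s_{0},\ldots,s_{\ell})$, and the prefix of $\Lambda_{\theta}$ of length $\ell$ is $\lambda(\Delta S_{M})$. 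Since $\lambda$ is applied letterwise and is injective on the distinct differences of $\Delta S_{\theta}$, occurrences of a factor of $\Delta S_{M}$ correspond bijectively to occurrences of its $\lambda$-image in $\lambda(\Delta S_{M})$, so it suffices to find a palindromic suffix $\Delta M$ of $\Delta S_{M}$ that occurs only once in $\Delta S_{M}$. By Lemma \ref{notc}, any medial subpalindrome $\Delta M$ of some $\Delta C_{K}$ with $\Delta N_{K} \subseteq \Delta M \subseteq \Delta C_{K}$ and $\max(M) = s_{\ell}$ has exactly this property; thus everything reduces to producing, for each $s_{\ell}$ with $\ell \geq 1$, such an $M$ with greatest element $s_{\ell}$.

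To construct $M$, first locate the index $K$ for which $\max(N_{K}) \leq s_{\ell} < \max(N_{K+1})$, that is, $s_{\ell} \in T_{K}$. Let $M$ be the contiguous block of $C_{K}$ consisting of $N_{K}$ together with the elements of $T_{K}$ from $\max(N_{K})$ up to $s_{\ell}$ and the complementary elements of $T^{\prime}_{K}$, the complement being taken with respect to $A + B\theta$ as in the construction of $C_{K}$. Because $C_{K}$ is a sequence of complements (Lemma \ref{ciscomp}) and $M$ is precisely the sub-block of $C_{K}$ symmetric about the centre of $N_{K}$, the sequence $M$ is itself a sequence of complements; hence by Proposition \ref{palcomp} $\Delta M$ is a palindrome, it has $\Delta N_{K}$ as a medial subpalindrome, and $\max(M) = s_{\ell}$. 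Lemma \ref{notc} now yields that $\Delta M$ is a ups of $\Delta S_{M}$, and applying $\lambda$ shows the length-$\ell$ prefix of $\Lambda_{\theta}$ ends in a ups. Letting $\ell$ range over all positive integers, and handling the empty and length-one prefixes directly (the empty palindrome, respectively the single letter $\lambda(\delta_{\theta}(0)) = 0$), this establishes that $\Lambda_{\theta}$ is rich.

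The only step that requires real attention is the existence of the index $K$ above, i.e. that as $K$ runs through $K^{(-)} \cup K^{(+)} = \Nset$ (Lemma \ref{beat}) the half-open intervals $[\max(N_{K}),\max(N_{K+1}))$ cover $[1,\infty)$. Equivalently, $K \mapsto \max(N_{K})$ is strictly increasing with $\max(N_{K}) \to \infty$ and smallest value $\max(N_{1}) = 1$ (the first nuclear sequence being $(s_{0},s_{1}) = (0,1)$); the base value and unboundedness are immediate from the displays defining $N_{K^{(-)}}$ and $N_{K^{(+)}}$. For the monotonicity I would argue as follows. Writing $K^{(-)}_{k} = \lfloor k(\theta+1)\rfloor$ and $K^{(+)}_{k} = \lfloor k(\theta+1)/\theta\rfloor$, one has $\max(N_{K^{(-)}_{k}}) = k\theta$ and $\max(N_{K^{(+)}_{k}}) = k$. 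Both $k \mapsto K^{(-)}_{k}$ and $k \mapsto K^{(+)}_{k}$ are strictly increasing, so it only remains to compare an index of one type with an index of the other; there, $k_{1}(\theta+1)$ and $k_{2}(\theta+1)/\theta$ are never equal (otherwise $k_{1}\theta = k_{2}$, impossible for irrational $\theta$), and since the two Beatty sequences are disjoint their floors $K^{(-)}_{k_{1}}$ and $K^{(+)}_{k_{2}}$ are ordered the same way as $k_{1}(\theta+1)$ and $k_{2}(\theta+1)/\theta$, hence — dividing by $\theta+1$ — the same way as $k_{1}\theta$ and $k_{2}$. Thus $K^{(-)}_{k} \mapsto k\theta$ and $K^{(+)}_{k} \mapsto k$ is order-preserving, which is exactly the monotonicity of $K \mapsto \max(N_{K})$. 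I expect this Beatty-sequence bookkeeping to be the main obstacle: not because it is deep, but because it is the one place where the arithmetic of $\theta$ must be pinned down exactly, whereas everything afterwards is just assembling Proposition \ref{palcomp} with Lemmas \ref{beat}, \ref{nkcomp}, \ref{ciscomp}, and \ref{notc}.
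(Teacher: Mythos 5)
Your proposal is correct and follows essentially the same route as the paper: it invokes the Droubay--Justin--Pirillo ups criterion and, for each prefix, produces the uni-occurrent palindromic suffix as (the $\lambda$-image of) a medial subpalindrome $\Delta M$ of $\Delta C_{K}$ with $\max(M)=s_{\ell}$, exactly via Proposition \ref{palcomp} and Lemmas \ref{beat}, \ref{nkcomp}, \ref{ciscomp}, and \ref{notc}. The only difference is that you spell out more explicitly the symmetric construction of $M$ inside $C_{K}$ and the monotonicity/tiling of the intervals $[\max(N_{K}),\max(N_{K+1}))$, which the paper asserts more briefly in its proof of Theorem \ref{lamisrich}.
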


\begin{proof}
From (\ref{mlim}) it follows that $\max(N_{K}) \le \max(M) \le \max(C_{K})$.
Then $\max (M)$ is any member of $T_{K}$. Furthermore, for any $s_{n} >0$, there is exactly one $T_{K}$ such that $s_{n} \in T_{K}$ because $\max(T_{K}) < \max(N_{K+1}) = \min(T_{K+1})$.
Therefore every (non-empty) prefix of $\Delta S_{M}$ of $\Delta S_{\theta}$ ends with ups $\Delta M$. The bijection $\lambda$ guarantees that $\Lambda_{\theta}$ is a rich word.
\end{proof}

\section{A projection of $\Lambda_{\theta}$ onto $\Gamma_{\theta}$, a word over a three-letter alphabet
}\label{3alphsec}

In Corollary \ref{3limit}, we found that any palindrome in $\Lambda_{\theta}$ is over a three-letter alphabet $\mathscr{X} = \{x,y,z\} \subset \mathscr{A}$. This suggests that we may map any Lambda word onto a word with only three symbols that preserves both palindromes and non-palindromes of the Lambda word. We construct a mapping $\gamma$ such that $\gamma(W) = \gamma(Z)$. We also require $\gamma(X) \ne \gamma(Y)$, where, as before $f(X)$ and $f(Y)$ form a member of the Hurwitz chain for $\theta$. Finally, both $\gamma(X)$ and $\gamma(Y)$ must be distinct from $\gamma(Z)$.

Let $A/B$ be a convergent of $\theta$ and let $a_{k}/b_{k}$ be the principal convergent that immediately precedes $A/B$. Then the pair $(A/B,a_{k}/b_{k})$ forms a member of the Hurwitz chain for $\theta$. We assign seed values to the first member of the chain $(1/1,1/0)$:
\[
\begin{array}{lllll}
\gamma |a_{-1}-b_{-1}\theta| &=& \gamma(1-0\theta) &=& 0\\
\gamma |a_{0}-b_{0}\theta| &=& \gamma(-1+1\theta) &=& 1
\end{array}
\]
and then define recursively:
\begin{equation}\label{3map}
\gamma|A-B\theta | = {3-(\gamma |a_{k}-b_{k}\theta |+\gamma| (A-a_{k}) - (B-b_{k})\theta |)}.
\end{equation}

Because $f(A/B) = f(a_{k}/b_{k}) \oplus f\left((A-a_{k})/(B-b_{k})\right)$, we can interpret (\ref{3map}) as $\gamma(Z) = {3 - (\gamma(Y) + \gamma(X)}$. 
We can rearrange (\ref{3map}) into
\begin{equation}\label{3map2}
\gamma|(A-a_{k}) - (B-b_{k})\theta | = {3-(\gamma|A-B\theta | +\gamma |a_{k}-b_{k}\theta |)}
\end{equation}
which may be construed as $\gamma(W) = {3-(\gamma(X) + \gamma(Y))}$.
The set $\{\gamma(X),\gamma(Y),\gamma(Z)\}$ maps bijectively onto $\{0,1,2\}$ and so does $\{\gamma(W),\gamma(X),\gamma(Y)\}$.
Therefore, 
\[
{\gamma(W) + \gamma(X) + \gamma(Y)} = {\gamma(X) + \gamma(Y) + \gamma(Z)} = 3.
\]
Moreover, for a fixed $(X,Y)$, $\gamma(W) = \gamma(Z)$ as required.
Letting $\vartheta = \log_{2} 3$, Figure \ref{into3} shows the projection of the infinite alphabet $\mathscr{A}$ of $\Lambda_{\vartheta}$ onto $\{0,1,2\}$.

\begin{figure}
\[
\begin{array}{|c|c|c|}
\hline
A/B & \lambda | A-B\vartheta | & \gamma|A-B\vartheta |\\
\hline
1/0 & 0 & 0\\
1/1 & 1 & 1\\
2/1 & 2 & 2\\
3/2 & 3 & 0\\
5/3 & 4 & 1\\
8/5 & 5 & 2\\
11/7 & 6 & 1\\
19/12 & 7 & 0\\
27/17 & 8 & 1\\
46/29 & 9 & 2\\
65/41 & 10 & 1\\
84/53 & 11 & 2\\
\hline
\end{array}
\]
\caption{The mappings $\lambda|A-B\vartheta|$ and $\gamma|A-B\vartheta|$.}
\label{into3}
\end{figure}

When $\theta = \phi$, the ``golden'' ratio, we can rewrite (\ref{3map}) and (\ref{3map2}) as
\[
\gamma |a_{k+1}-b_{k+1}\phi | = {3-(\gamma|a_{k}-b_{k}\phi | + \gamma |a_{k-1}-b_{k-1}\phi|)} = \gamma |a_{k-2}-b_{k-2}|\phi.
\]
Because $\gamma|a_{0}-b_{0}\theta| = 1$, it follows that $\gamma |a_{k}-b_{k}\phi| \equiv {k+1} {\pmod 3}$. That is, the successive values for $\gamma |A-B\phi|$ repeat the pattern (0, 1, 2).
With any other value of $\theta$, intermediate convergents may come between $a_{k}/b_{k}$ and $a_{k+1}/b_{k+1}$. If $\gamma |a_{k}-b_{k}\theta | = n$, then, under $\gamma$, the images of the intermediate convergents that follow alternate between ${n+1} {\pmod 3}$ and ${n-1} {\pmod 3}$. When $\theta$ is a quadratic irrational, the successive values for $\gamma |A-B\theta|$ are eventually cyclic, whereas other values of $\theta$ exhibit no cyclic patterns. This, of course, reflects the patterning of partial quotients in the continued fraction expansions of these values.

The mapping $\gamma$ partitions the set of convergents of $\theta$ into three classes, in contrast to the well-observed partitioning into two classes that depends on whether $A/B$ is greater than or less than $\theta$. The properties of this tripartite classification could warrant further investigation.

The mapping defined in (\ref{3map}) projects $\Lambda_{\theta}$ onto the {\it Gamma word generated by $\theta$}, $\Gamma_{\theta}$, a word defined over $\{0,1,2\}$. The table below illustrates that projection when  ${\theta} = \vartheta = \log_{2}3$:
\[
\begin{array}{| l | c c c c c c c c c c c c c c c c c |}
\hline
n & 0 & 1 & 2 & 3 & 4 & 5 & 6 & 7 & 8 & 9 & 10 & 11 & 12 & 13 & 14 & 15 & 16 \\
\hline
\Lambda_{\vartheta} & 0& 1& 2& 1& 2& 3& 2& 2& 3& 2& 3& 4& 3& 2& 3& 4& 3\\
\Gamma_{\vartheta} & 0& 1& 2& 1& 2& 0& 2& 2& 0& 2& 0& 1& 0& 2& 0& 1& 0\\
\hline
n & 17 & 18 &19 & 20 & 21 & 22 & 23 & 24 & 25 & 26 & 27 & 28 & 29 & 30 & 31 & 32 & 33 \\
\hline
\Lambda_{\vartheta} & 3& 4& 3& 4&   3& 3& 4& 3&   3& 5& 3& 3&   4& 3& 3& 5& 3\\
\Gamma_{\vartheta} & 0& 1& 0& 1&   0& 0& 1& 0&   0& 2& 0& 0&   1& 0& 0& 2& 0\\
\hline
\end{array}
\]

Although $\Gamma_{\theta}$ preserves palindromes and non-palindromes of $\Lambda_{\theta}$, $\Gamma_{\theta}$ is itself not rich: If any factor of a word is not rich, the word itself is not rich. The first six-letter factor of $\Gamma_{\vartheta}$, $012120$, has no uni-occurrent palindromic suffix and is therefore not rich, a property that $\Gamma_{\vartheta}$ inherits. Certainly there are recurrent factors in the Gamma word but it is not known if all factors of all Gamma words are recurrent.

\section{Acknowledgments}
The author wishes to thank the editor and anonymous readers for their
invaluable help. Thanks are also due to David Clampitt for many timely
suggestions and corrections and to the other members of Micrologus,
Emmanuel Amiot and Thomas Noll. Also, thanks to Sre\v{c}ko Brlek who
provided insight into full (or rich) words at a conference at IRCAM.

\bigskip
\hrule
\bigskip

\noindent 2013 {\it Mathematics Subject Classification} Primary 68R15; Secondary 06F05, 11J70, 11Y55.\\
\noindent{\it Keywords}: combinatorics on words, palindrome, rich word, continued fraction, Hurwitz chain. 

\bigskip
\hrule
\bigskip

\noindent(Concerned with sequences 
\seqnum{A022330},
\seqnum{A022331},
\seqnum{A167267},
\seqnum{A216448},
\seqnum{A216763}, and
\seqnum{A216764}.)

\bigskip
\hrule
\bigskip

\vspace*{+.1in}
\noindent
Received September 15 2012;
revised version received  January 18 2013; February 10 2013.
Published in {\it Journal of Integer Sequences}, March 2 2013.

\bigskip
\hrule
\bigskip

\noindent
Return to
\htmladdnormallink{Journal of Integer Sequences home page}{http://www.cs.uwaterloo.ca/journals/JIS/}.
\vskip .1in


\begin{thebibliography}{99}
\bibitem{all03}{J.-P. Allouche, M. Baake, J. Cassaigne, and D. Damanik,
Palindrome complexity. {\it Theoret. Comput. Sci.} {\bf{292}} (2003),
9--31.}

\bibitem{bea27}{S. Beatty, A. Ostrowski, J. Hyslop, and A. C. Aitken,
Solutions to Problem 3173. {\it Amer. Math. Monthly} {\bf{34}} (1927),
159--160.}

\bibitem{ber02}{J. Berstel and P. S\'{e}\'{e}bold, Sturmian words, in
{\it Algebraic Combinatorics on Words}, Encyc. Math. Appl., Vol.\ 90,
Cambridge Univ. Press, 2002, pp. 45--110.}

\bibitem{ber08}{J. Berstel, A. Lauve, C. Reutenauer, and F. V. Saliola,
{\it Combinatorics on Words: Christoffel Words and Repetitions in
Words}, CRM Monograph Series, Vol.\ 27, Amer. Math. Soc., 2008.}

\bibitem{blo11}{A. Blondin Mass\'{e}, S. Brlek, S. Labb\'{e}, and L.
Vuillon, Palindromic complexity of codings of rotations, {\it Theoret.
Comput. Sci.} {\bf{412}} (2011), 6455--6463.}

\bibitem{brl04}{S. Brlek, S. Hamel, M. Nivat, and C. Reutenauer: On the
palindromic complexity of infinite words, {\it Internat. J. Found.
Comput. Sci.} {\bf{15}} (2004), 293--306.}

\bibitem{buc09a}{M. Bucci, A. de Luca, and A. De Luca, Rich and
periodic-like words, in {\it DLT 2009, Proceedings of the 13th
International Conference on Developments in Language Theory}, Lect.
Notes in Comput. Sci., Vol.\ 5583, Springer, 2009, pp.\ 145--155.}

\bibitem{buc09b}{M. Bucci, A. De Luca, A. Glen, and L. Q. Zamboni, A
new characteristic property of rich words, {\it Theoret. Comput. Sci.}
{\bf{410}} (2009), 2860--2863.}

\bibitem{car11}{N. Carey, On a class of locally symmetric sequences:
The right infinite word $\Lambda_{\theta}$, in {\it Mathematics and
Computation in Music}, Lect. Notes in Comp. Sci., Vol.\ 6726, Springer,
2011, pp.\ 42--55.}

\bibitem{car96}{N. Carey and D. Clampitt, Regions: A theory of tonal
spaces in early medieval treatises, {\it J. Music Th.} {\bf{40}}
(1996), 113--147.}

\bibitem{car12}{N. Carey and D. Clampitt, Two theorems concerning
rational approximations, {\it J. Math. Mus.} {\bf{6}} (2012), 61--66.}

\bibitem{carp05}{A. Carpi and A. de Luca, Central sturmian words:
Recent developments, in {\it Developments in Language Theory}, Lect.
Notes in Comp. Sci., Vol. 3572, Springer, 2005, pp.\ 36--56. }

\bibitem{dro01}{X. Droubay, J. Justin, and G. Pirillo, Episturmian
words and some constructions of de Luca and Rauzy, {\it Theoret.
Comput. Sci.} {\bf{255}} (2001), 539--553.}

\bibitem{dro95}{X. Droubay, Palindromes in the Fibonacci word, {\it
Inform. Process. Lett.} {\bf{55}} (1995), 217--221.}

\bibitem{dro99}{X. Droubay, and G. Pirillo, Palindromes and Sturmian
words, {\it Theoret. Comput. Sci.} {\bf{223}} (1999), 73--85.}

\bibitem{gar89}{M. Gardner, {\it Penrose Tiles and Trapdoor Ciphers
\ldots and the Return of Dr. Matrix}, W. H. Freeman, 1989.}

\bibitem{gol88}{J. Goldman, Hurwitz sequences, the Farey process, and
general continued fractions, {\it Adv. Math.} {\bf{72}} (1988),
239--260.}

\bibitem{gle09a}{A. Glen, and J. Justin, Episturmian words: a survey,
{\it RAIRO Theor. Inform. Appl.} {\bf{43}} (2009), 402--433.}

\bibitem{gle09b}{A. Glen, J. Justin, S. Widmer, and L. Q. Zamboni,
Palindromic richness, {\it European J. Combin.} {\bf{30}} (2009),
510--531.}

\bibitem{gra94}{R. L. Graham, D. E. Knuth, and O. Patashnik, {\it
Concrete Mathematics: A Foundation for Computer Science}, 2nd ed.,
Addison-Wesley, 1994.}

\bibitem{hur94}{A. Hurwitz, \"Uber die angen\"aherte Darstellung der
Zahlen durch rationale Br\"uche, {\it Math. Ann.} {\bf{44}} (1894),
417--436.}

\bibitem{jon55}{B. Jones, {\it The Theory of Numbers}, Holt, Rinehart,
and Winston, 1955.}

\bibitem{kim93}{C. Kimberling, Interspersions and dispersions, {\it
Proc. Amer. Math. Soc.} {\bf{117}} (1993), 313--321.}

\bibitem{kim97}{C. Kimberling, Fractal sequences and interspersions,
{\it Ars Combin.} {\bf{45}} (1997), 157--168.}

\bibitem{kim04}{C. Kimberling and J. Brown, Partial complements and
transposable dispersions, {\it J. Integer. Seq.} {\bf{7}} (2004),
\href{https://cs.uwaterloo.ca/journals/JIS/VOL7/Kimberling/kimber67.html}{Article
04.1.6.}}

\bibitem{khi35}{A. Khinchin, {\it Continued Fractions}, trans.\ ed.\ by
H. Eagle. Original Russian edition, 1935. Univ.\ Chicago Press, 1964. }

\bibitem{lot02} M. Lothaire, {\it Algebraic Combinatorics on Words},
Encyc. Math. Appl., Vol.\ 90, Cambridge Univ. Press, 2002.

\bibitem{luc08}{A. Luca, A. Glen, and L. Q. Zamboni, Rich, Sturmian,
and trapezoidal words, {\it Theoret. Comput. Sci.} {\bf{407}} (2008),
569--573.}

\bibitem{old63}{ C. D. Olds, {\it Continued Fractions}, Random House,
1963.}

\bibitem{ric81}{I. Richards, Continued fractions without tears, {\it
Math. Mag.} {\bf{54}} (1981) 163--171. }

\bibitem{} {N. J. A. Sloane, The On-Line Encyclopedia of Integer
Sequences, \url{http://oeis.org}.}

\bibitem{ste58}{M. A. Stern, \"{U}ber eine zahlentheoretische Funktion.
{\it J. reine angew. Math.} \bf{55}} (1858) 193--220.
\end{thebibliography}
\end{document}